\numberwithin{equation}{section}
\theoremstyle{plain}
\newtheorem{thm}{Theorem}[section]
\newtheorem{cor}{Corollary}[thm]
\newtheorem{lem}{Lemma}[section]
\theoremstyle{definition}
\newtheorem{dfn}{Definition}[section]
\theoremstyle{remark}
\newtheorem{rem}{Remark}[section]
\DeclareMathOperator{\td}{d\mspace{-2mu}}
\begin{document}

\title[Inequalities of Hermite-Hadamard type]
{Integral inequalities of Hermite-Hadamard type for $\boldsymbol{(\alpha,m)}$-GA-convex functions}

\author[A.-P. Ji]{Ai-Ping Ji}
\address[A.-P. Ji]{College of Mathematics, Inner Mongolia University for Nationalities, Tongliao City, Inner Mongolia Autonomous Region, 028043, China}
\email{\href{mailto: A.-P. Ji <jiaiping999@126.com>}{jiaiping999@126.com}}

\author[T.-Y. Zhang]{Tian-Yu Zhang}
\address[T.-Y. Zhang]{College of Mathematics, Inner Mongolia University for Nationalities, Tongliao City, Inner Mongolia Autonomous Region, 028043, China}
\email{\href{mailto: T.-Y. Zhang <zhangtianyu7010@126.com>}{zhangtianyu7010@126.com}}

\author[F. Qi]{Feng Qi}
\address[F. Qi]{School of Mathematics and Informatics, Henan Polytechnic University, Jiaozuo City, Henan Province, 454010, China; Department of Mathematics, School of Science, Tianjin Polytechnic University, Tianjin City, 300387, China}
\email{\href{mailto: F. Qi <qifeng618@gmail.com>}{qifeng618@gmail.com}, \href{mailto: F. Qi <qifeng618@hotmail.com>}{qifeng618@hotmail.com}, \href{mailto: F. Qi <qifeng618@qq.com>}{qifeng618@qq.com}}
\urladdr{\url{http://qifeng618.wordpress.com}}

\begin{abstract}
In the paper, the authors introduce a notion ``$(\alpha,m)$-GA-convex functions'' and establish some integral inequalities of Hermite-Hadamard type for $(\alpha,m)$-GA-convex functions.
\end{abstract}

\subjclass[2010]{Primary 26A51; Secondary 26D15, 41A55}

\keywords{Integral inequalities of Hermite-Hadamard type; $m$-convex function; $(\alpha,m)$-convex function; $(\alpha,m)$-GA-convex function; H\"older inequality}

\thanks{This work was partially supported by the Foundation of the Research Program of Science and Technology at Universities of Inner Mongolia Autonomous Region under grant number NJZY13159, China}

\thanks{This paper was typeset using \AmS-\LaTeX}

\maketitle

\section{Introduction}

In~\cite{Mihesan-1993-Romania, Toader-Proc-1985-338}, the concepts of $m$-convex functions and $(\alpha,m)$-convex functions were introduced as follows.

\begin{dfn}[\cite{Toader-Proc-1985-338}]
A function $f:[0,b]\to \mathbb{R}$ is said to be $m$-convex for $m\in(0,1]$ if the inequality
\begin{equation}
f(\alpha x+m(1-\alpha)y)\le \alpha f(x)+m(1-\alpha)f(y)
\end{equation}
holds for all $x,y\in [0,b]$ and $\alpha\in[0,1].$
\end{dfn}

\begin{dfn}[\cite{Mihesan-1993-Romania}]
For $f:[0,b]\to\mathbb{R}$ and $(\alpha,m)\in(0,1]^2$, if
\begin{equation}
 f(tx+m(1-t)y)\le t^\alpha f(x)+m(1-t^\alpha)f(y)
\end{equation}
is valid for all $x,y\in[0,b]$ and $t\in[0,1]$, then we say that $f(x)$ is an $(\alpha,m)$-convex function on $[0,b]$.
\end{dfn}

Hereafter, a few of inequalities of Hermite-Hadamard type for the $m$-convex and $(\alpha,m)$-convex functions were presented, some of them can be recited as following theorems.

\begin{thm}[{\cite[Theorems~2.2]{Klaric-Ozdemir-Pecaric-08-1032}}] \label{Klaric-Ozdemir-Pecaric-08-1032}
Let $I\supset\mathbb{R}_0=[0,\infty)$ be an open interval and let $f:I\to\mathbb{R}$ be a differentiable function on $I$ such that $f'\in L([a,b])$ for $0\le a<b<\infty$, where $L([a,b])$ denotes the set of all Lebesgue integrable functions on $[a,b]$. If $|f'(x)|^{q}$ is $m$-convex on $[a,b]$ for some given numbers $m\in(0,1]$ and $q\ge1$, then
\begin{multline}
\biggl|f\biggl(\frac{a+b}{2}\biggr)-\frac{1}{b-a} \int_a^bf(x)\td x\biggr| \\
\le \frac{b-a}{4}
\min \biggl\{\biggl[\frac{|f'(a)|^{q}+m|f'(b/m)|^{q}}{2}\biggr]^{1/q}, \biggl[\frac{m|f'(a/m)|^{q}+|f'(b)|^{q}}{2}\biggr]^{1/q} \biggr\}.
\end{multline}
\end{thm}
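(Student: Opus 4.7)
The plan is to derive the bound from a standard ``midpoint'' integral identity that expresses the left-hand side as a weighted integral of $f'$, then to apply the power-mean inequality and invoke the $m$-convexity of $|f'|^{q}$ in two dual ways to obtain the minimum.

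First I would record (or prove by a direct integration by parts on each half) the Kirmaci-type identity
\begin{equation*}
f\!\left(\frac{a+b}{2}\right)-\frac{1}{b-a}\int_a^b f(x)\,\td x
=(b-a)\left[\int_0^{1/2} t\,f'(ta+(1-t)b)\,\td t+\int_{1/2}^{1}(t-1)\,f'(ta+(1-t)b)\,\td t\right].
\end{equation*}
Taking absolute values and splitting, the problem reduces to estimating $\int_0^{1/2} t\,|f'(ta+(1-t)b)|\,\td t$ and $\int_{1/2}^{1}(1-t)\,|f'(ta+(1-t)b)|\,\td t$. By the substitution $t\mapsto 1-t$ the second integral has the same kernel weight as the first, so only one archetypal estimate is needed.

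Next, since $q\ge 1$, I would apply the power-mean (Hölder with exponents $1$ and $q$) inequality in the form
\begin{equation*}
\int_0^{1/2} t\,|f'(ta+(1-t)b)|\,\td t\le \left(\int_0^{1/2} t\,\td t\right)^{1-1/q}\left(\int_0^{1/2} t\,|f'(ta+(1-t)b)|^q\,\td t\right)^{1/q},
\end{equation*}
so that the ``weight in front'' produces the constants, and the $|f'|^q$ under the integral can be controlled by $m$-convexity. Here the key observation is that the argument $ta+(1-t)b$ admits two natural $m$-convex decompositions:
\begin{equation*}
ta+(1-t)b=t\cdot a+m(1-t)\cdot\frac{b}{m}
=(1-t)\cdot b+m\,t\cdot\frac{a}{m},
\end{equation*}
yielding the two upper bounds $t|f'(a)|^{q}+m(1-t)|f'(b/m)|^{q}$ and $(1-t)|f'(b)|^{q}+mt|f'(a/m)|^{q}$ respectively. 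Using the first decomposition everywhere produces one candidate bound and the second decomposition produces the other; the stated bound is then the minimum of the two since both are valid.

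Finally I would evaluate the resulting elementary integrals $\int_0^{1/2}t\,\td t=\tfrac18$, $\int_0^{1/2}t^{2}\,\td t=\tfrac1{24}$, $\int_0^{1/2}t(1-t)\,\td t=\tfrac1{12}$, and their symmetric counterparts on $[1/2,1]$, and sum the two halves; the factor $(b-a)$ from the identity combines with the numerical constants to give the claimed prefactor $\tfrac{b-a}{4}$. The main obstacle is bookkeeping: keeping the kernel weights $t$ and $1-t$ aligned on the two halves so that after the power-mean step the integrals combine cleanly into the simple averages $\tfrac12\bigl(|f'(a)|^{q}+m|f'(b/m)|^{q}\bigr)$ and $\tfrac12\bigl(m|f'(a/m)|^{q}+|f'(b)|^{q}\bigr)$; apart from that, each step is routine.
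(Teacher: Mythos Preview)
The paper does not actually prove this theorem: it is quoted in the introduction as a known background result from \cite{Klaric-Ozdemir-Pecaric-08-1032}, so there is no ``paper's own proof'' to compare your attempt against.

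That said, your sketch is essentially the standard argument from the original source. One point of bookkeeping deserves care: if you apply the power-mean inequality separately on $[0,1/2]$ and $[1/2,1]$ as you describe, the two resulting $q$-th-power integrals carry asymmetric coefficients (e.g.\ $\tfrac1{24}|f'(a)|^{q}+\tfrac{m}{12}|f'(b/m)|^{q}$ on one half and the swapped version on the other), and these do \emph{not} add to the clean average $\tfrac18\bigl(|f'(a)|^{q}+m|f'(b/m)|^{q}\bigr)$ until you take the $1/q$-th power and invoke concavity again. The cleaner route is to keep the kernel $p(t)$ (equal to $t$ on $[0,1/2]$ and $1-t$ on $[1/2,1]$) intact and apply the power-mean inequality once over the whole interval $[0,1]$; since $p$ is symmetric about $t=1/2$, one gets $\int_0^1 p(t)\,t\,\td t=\int_0^1 p(t)(1-t)\,\td t=\tfrac18$ directly, and the constants fall out as $(b-a)(1/4)^{1-1/q}(1/8)^{1/q}=\tfrac{b-a}{4}\cdot 2^{-1/q}$. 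Apart from this, your plan is correct.
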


\begin{thm}[{\cite[Theorem~3.1]{Klaric-Ozdemir-Pecaric-08-1032}}]
Let $I\supset[0,\infty)$ be an open interval and let $f:I\to(-\infty, \infty)$ be a differentiable function on $I$ such that $f'\in L([a,b])$ for $0\le a<b<\infty$. If $|f'(x)|^{q} $ is $(\alpha,m)$-convex on $[a,b]$ for some given numbers $m, \alpha\in(0,1]$, and $q\ge1$, then
\begin{multline}
\biggl|\frac{f(a)+f(b)}{2}-\frac{1}{b-a} \int_a^bf(x)\td x\biggr|
\le \frac{b-a}{2}\biggl(\frac{1}{2}\biggr)^{1-1/q}\\*
\times\min\biggl\{\biggl[v_1|f'(a)|^{q}+v_{2}m \biggl|f'\biggl(\frac{b}{m}\biggr)\biggr|^{q}\biggr]^{1/q}, \biggl[v_{2}m\biggl|f'\biggl(\frac{a}{m}\biggr)\biggr|^{q} +v_1|f'(b)|^{q}\biggr]^{1/q} \biggr\},
\end{multline}
where
\begin{equation}
v_1=\frac{1}{(\alpha+1)(\alpha+2)}\biggl(\alpha+\frac{1}{2^\alpha}\biggr)
\end{equation}
and
\begin{equation}
v_{2}=\frac{1}{(\alpha+1)(\alpha+2)}\biggl(\frac{\alpha^{2} +\alpha+2}{2}-\frac{1}{2^\alpha}\biggr).
\end{equation}
\end{thm}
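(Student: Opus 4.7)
The plan is to combine the standard trapezoid-type integral identity with the power-mean (H\"older) inequality and then apply the $(\alpha,m)$-convexity of $|f'|^q$ twice, in two symmetric ways, to obtain the minimum of the two bounds. I would begin from the well-known identity
\begin{equation*}
\frac{f(a)+f(b)}{2}-\frac{1}{b-a}\int_a^b f(x)\td x=\frac{b-a}{2}\int_0^1 (1-2t)f'(ta+(1-t)b)\td t,
\end{equation*}
which follows from one integration by parts after the substitution $u=ta+(1-t)b$. Taking absolute values and moving them inside the integral on the right reduces everything to estimating $\int_0^1|1-2t|\,|f'(ta+(1-t)b)|\td t$.

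Next, for $q\ge 1$, I would apply the power-mean inequality with weight $|1-2t|$, obtaining
\begin{equation*}
\int_0^1|1-2t|\,|f'(ta+(1-t)b)|\td t\le\biggl(\int_0^1|1-2t|\td t\biggr)^{1-1/q}\biggl(\int_0^1|1-2t|\,|f'(ta+(1-t)b)|^q\td t\biggr)^{1/q}.
\end{equation*}
Since $\int_0^1|1-2t|\td t=\frac{1}{2}$, this produces the prefactor $(1/2)^{1-1/q}$ present in the conclusion. To invoke the $(\alpha,m)$-convexity of $|f'|^q$, I would write $ta+(1-t)b=ta+m(1-t)(b/m)$ and use the definition to obtain
\begin{equation*}
|f'(ta+(1-t)b)|^q\le t^\alpha|f'(a)|^q+m(1-t^\alpha)|f'(b/m)|^q,
\end{equation*}
so that after setting $v_1=\int_0^1|1-2t|t^\alpha\td t$ and $v_2=\int_0^1|1-2t|(1-t^\alpha)\td t$ one recovers the first alternative inside the minimum. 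The second alternative follows from the symmetric decomposition $ta+(1-t)b=m\,t(a/m)+(1-t)b$, or equivalently from the substitution $t\mapsto 1-t$ before invoking convexity, which swaps the roles of $a$ and $b$ (and exploits that $|1-2t|$ is symmetric about $t=1/2$, so the coefficients are again $v_1$ and $v_2$).

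The main technical obstacle is the explicit evaluation of $v_1$ (from which $v_2=\tfrac{1}{2}-v_1$ follows). Because $|1-2t|$ changes sign at $t=1/2$, one has to split each integral there and carry out a careful algebraic consolidation of terms involving $(1/2)^{\alpha+1}$, $1/(\alpha+1)$, and $1/(\alpha+2)$ in order to arrive at the stated closed forms
\begin{equation*}
v_1=\frac{\alpha+2^{-\alpha}}{(\alpha+1)(\alpha+2)},\qquad v_2=\frac{(\alpha^2+\alpha+2)/2-2^{-\alpha}}{(\alpha+1)(\alpha+2)}.
\end{equation*}
This computation is routine but tedious; every other step of the proof is mechanical.
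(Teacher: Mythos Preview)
Your argument is correct: the trapezoid identity, the weighted power-mean step producing the factor $(1/2)^{1-1/q}$, the two symmetric applications of $(\alpha,m)$-convexity, and the evaluation of $v_1$ and $v_2=\tfrac12-v_1$ all check out exactly as you describe. One cosmetic remark: your ``symmetric decomposition'' $ta+(1-t)b=m\,t(a/m)+(1-t)b$ is not literally in the form $sx+m(1-s)y$ required by the definition; what actually works is your alternative phrasing, namely the substitution $t\mapsto 1-t$ (equivalently, writing $ta+(1-t)b=(1-t)b+m\,t\,(a/m)$ and applying the definition with $s=1-t$, $x=b$, $y=a/m$), after which the symmetry of $|1-2t|$ about $t=1/2$ gives back $v_1$ and $v_2$.

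However, there is nothing in the present paper to compare against: this theorem appears only in the Introduction as a quotation of \cite[Theorem~3.1]{Klaric-Ozdemir-Pecaric-08-1032}, and the authors do not reprove it. Your outline is in fact the standard proof from that reference, so in that sense it matches the original source rather than anything in this paper.
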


For more information on Hermite-Hadamard type inequalities for various kinds of convex functions, please refer to the monograph~\cite{Dragomir-selected-Topic}, the recently published papers~\cite{Hadramard-Convex-Xi-Filomat.tex, H-H-Bai-Wang-Qi-2012.tex, Dragomir-Agarwal-AML-98-95, Kirmaci-AMC-04-146, Kir-Bak-Ozd-Pec-AMC-26-35, Wang-Qi-Xi-Hadamard-IJOPCM.tex, Xi-Bai-Qi-Hadamard-2011-AEQMath.tex}, and closely related references therein.
\par
In this paper, we will introduce a new concept ``$(\alpha,m)$-geometric-arithmetically convex function'' (simply speaking, $(\alpha,m)$-GA-convex function) and establish some integral inequalities of Hermite-Hadamard type for $(\alpha,m)$-GA-convex functions.

\section{A definition and a lemma}

Now we introduce the so-called $(\alpha,m)$-GA-convex functions.

\begin{dfn}\label{(a,m)-GA-convex-dfn}
Let $f:[0,b] \to \mathbb{R}$ and $(\alpha,m)\in [0,1]^2$. If
\begin{equation}\label{(a,m)-GA-convex-dfn-eq}
f\bigl(x^\lambda y^{m(1-\lambda)}\bigr)\le \lambda ^\alpha f(x)+ m(1-\lambda^\alpha )f(y)
\end{equation}
for all $x,y\in [0,b]$ and $\lambda\in [0,1]$, then $f(x)$ is said to be a $(\alpha,m)$-geometric-arithmetically convex function or, simply speaking, an $(\alpha,m)$-GA-convex function. If \eqref{(a,m)-GA-convex-dfn-eq} is reversed, then $f(x)$ is said to be a $(\alpha,m)$-geometric-arithmetically concave function or, simply speaking, a $(\alpha,m)$-GA-concave function.
\end{dfn}

\begin{rem}
When $m=\alpha=1$, the $(\alpha,m)$-GA-convex (concave) function defined in Defintion~\ref{(a,m)-GA-convex-dfn} becomes a GA-convex (concave) function defined in~\cite{Niculescu-MIA-00-155, Niculescu-MIA-03-571}.
\end{rem}

To establish some new Hermite-Hadamard type inequalities for $(\alpha,m)$-GA-convex functions, we need the following lemma.

\begin{lem}\label{lem1-2012-GA-Ji}
Let $f:I \subseteq \mathbb{R}_+=(0,\infty)\to\mathbb{R}$ be a differentiable function and $a,b \in I$ with $a < b$. If $f'(x) \in L([a,b])$, then
\begin{equation}
\frac{b^2f(b)-a^2f(a)}{2}-\int_a^bxf(x)\td x = \frac{\ln b-\ln a}{2}\int_0^1 a^{3(1-t)}b^{3t}f'\bigl(a^{1-t}b^t\bigr)\td t.
\end{equation}
\end{lem}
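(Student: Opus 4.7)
The plan is to start from the right-hand side and reduce it to the left-hand side via a substitution followed by an integration by parts. First I would apply the change of variable $x = a^{1-t}b^t$, so that $\ln x = (1-t)\ln a + t\ln b$ and hence $\td x = x(\ln b - \ln a)\td t$. Under this substitution the limits map as $t=0 \mapsto x=a$ and $t=1 \mapsto x=b$, and the factor $a^{3(1-t)}b^{3t}$ becomes $(a^{1-t}b^t)^3 = x^3$. Thus the right-hand side collapses to
\begin{equation*}
\frac{\ln b-\ln a}{2}\int_a^b x^3 f'(x)\cdot\frac{\td x}{x(\ln b-\ln a)} = \frac{1}{2}\int_a^b x^2 f'(x)\,\td x.
\end{equation*}

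Next I would integrate by parts with $u = x^2$ and $\td v = f'(x)\td x$, giving
\begin{equation*}
\frac{1}{2}\int_a^b x^2 f'(x)\,\td x = \frac{1}{2}\Bigl[x^2 f(x)\Bigr]_a^b - \int_a^b x f(x)\,\td x = \frac{b^2 f(b)-a^2 f(a)}{2} - \int_a^b x f(x)\,\td x,
\end{equation*}
which is exactly the left-hand side of the claimed identity.

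There is no real obstacle here; the lemma is essentially a rewriting of the standard identity $\frac{1}{2}\int_a^b x^2 f'(x)\,\td x = \frac{b^2 f(b) - a^2 f(a)}{2} - \int_a^b xf(x)\,\td x$ in the exponential parametrisation $x = a^{1-t}b^t$ that is adapted to the geometric-arithmetic setting. The only thing one has to be slightly careful with is that $a,b\in I\subseteq(0,\infty)$ with $a<b$, so that $\ln b - \ln a > 0$ and the substitution $x = a^{1-t}b^t$ is a genuine $C^1$-diffeomorphism of $[0,1]$ onto $[a,b]$; the hypothesis $f'\in L([a,b])$ then guarantees integrability on both sides. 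One may equivalently present the argument in the reverse order (integrate by parts on the LHS first, then substitute $t = \frac{\ln x - \ln a}{\ln b - \ln a}$); both directions involve identical calculations.
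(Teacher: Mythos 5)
Your proposal is correct and follows essentially the same route as the paper: the substitution $x=a^{1-t}b^t$ converts the right-hand side into $\tfrac12\int_a^b x^2f'(x)\td x$, and integration by parts then produces the left-hand side. The paper's proof is just a more compressed version of the same two steps.
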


\begin{proof}
Let $x=a^{1-t}b^t$ for $0\le t\le 1$. Then
\begin{multline*}
(\ln b-\ln a)\int_0^1a^{3(1-t)}b^{3t}f'\bigl(a^{1-t}b^t\bigr)\td t
= \int_a^b x^2f'(x)\td x\\
=b^2f(b)-a^2f(a) -2\int_a^bxf(x)\td x.
\end{multline*}
Lemma~\ref{lem1-2012-GA-Ji} is thus proved.
\end{proof}

\section{Inequalities of Hermite-Hadamard type}

Now we turn our attention to establish inequalities of Hermite-Hadamard type for $(\alpha,m)$-GA-convex functions.

\begin{thm}\label{thm1-2012-GA-Ji}
Let $f:\mathbb{R}_0=[0,\infty)\to\mathbb{R}$ be a differentiable function and $f'\in L([a,b])$ for $0<a<b<\infty $. If $|f'|^q$ is $(\alpha,m)$-GA-convex on $\bigl[0,\max \{a^{1/m},b\}\bigr]$ for $(\alpha,m)\in (0,1]^2$ and $q\ge1$, then
\begin{multline}\label{thm1-2012-GA-Ji-1}
\biggl|\frac{b^2f(b)-a^2f(a)}{2}-\int_a^bxf(x)\td x\biggr|
 \le \frac{\ln b-\ln a}{2}\bigl[L\bigl(a^3,b^3\bigr)\bigr]^{1-1/q} \\
\times \bigl\{ {m\bigl[L\bigl(a^3,b^3\bigr) -G(\alpha,3)\bigr]\bigl|f'\bigl(a^{1/m}\bigr)\bigr|^q + G(\alpha,3)|f'(b)|^q} \bigr\}^{1/q},
\end{multline}
where
\begin{equation}\label{2012-GA-Ji}
G(\alpha,\ell) = \int_0^1 {t^\alpha a^{\ell(1-t)}}b^{\ell t} \td t
\end{equation}
for $\ell \ge 0$ and
\begin{equation}\label{log-mean-dfn-eq}
L(x,y)=\frac{y-x}{\ln y-\ln x}
\end{equation}
for $x,y>0$ with $x\ne y$.
\end{thm}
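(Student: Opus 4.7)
The plan is to start from the integral identity in Lemma~\ref{lem1-2012-GA-Ji}, pass to absolute values, and then combine two standard ingredients: the power-mean (H\"older) inequality applied to the resulting integral, and the $(\alpha,m)$-GA-convexity of $|f'|^q$. Taking the modulus of the identity immediately gives
\[
\biggl|\frac{b^2 f(b) - a^2 f(a)}{2} - \int_a^b x f(x)\,\td t\biggr| \le \frac{\ln b - \ln a}{2} \int_0^1 a^{3(1-t)} b^{3t} \bigl|f'\bigl(a^{1-t} b^t\bigr)\bigr|\,\td t.
\]
To introduce the exponent $q$, I would set $g(t) = a^{3(1-t)} b^{3t}$ and split $g = g^{1-1/q}\cdot g^{1/q}$; H\"older's inequality with conjugate exponents $q/(q-1)$ and $q$ then yields the weighted power-mean bound
\[
\int_0^1 g(t) \bigl|f'\bigl(a^{1-t} b^t\bigr)\bigr|\,\td t \le \biggl(\int_0^1 g(t)\,\td t\biggr)^{1-1/q} \biggl(\int_0^1 g(t) \bigl|f'\bigl(a^{1-t} b^t\bigr)\bigr|^q\,\td t\biggr)^{1/q}.
\]
A short direct computation shows that $\int_0^1 g(t)\,\td t = (b^3-a^3)/[3(\ln b-\ln a)] = L(a^3,b^3)$, which supplies the prefactor $[L(a^3,b^3)]^{1-1/q}$ appearing in~\eqref{thm1-2012-GA-Ji-1}.

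The decisive step is the application of the $(\alpha,m)$-GA-convexity hypothesis on $|f'|^q$. To rewrite the argument $a^{1-t} b^t$ in the canonical form $x^\lambda y^{m(1-\lambda)}$ of Definition~\ref{(a,m)-GA-convex-dfn}, I would choose $\lambda = t$, $x = b$, and $y = a^{1/m}$, so that $x^t y^{m(1-t)} = b^t\cdot a^{1-t}$; this is precisely what forces the hypothesis ``$(\alpha,m)$-GA-convex on $\bigl[0,\max\{a^{1/m}, b\}\bigr]$''. The convexity inequality then yields
\[
\bigl|f'\bigl(a^{1-t} b^t\bigr)\bigr|^q \le t^\alpha |f'(b)|^q + m\bigl(1 - t^\alpha\bigr) \bigl|f'\bigl(a^{1/m}\bigr)\bigr|^q,
\]
and integrating against $g(t)$, using the definition~\eqref{2012-GA-Ji} of $G(\alpha,\ell)$ with $\ell = 3$ together with $\int_0^1 g(t)\,\td t = L(a^3,b^3)$, produces
\[
\int_0^1 g(t) \bigl|f'\bigl(a^{1-t} b^t\bigr)\bigr|^q\,\td t \le G(\alpha,3) |f'(b)|^q + m\bigl[L(a^3,b^3) - G(\alpha,3)\bigr] \bigl|f'\bigl(a^{1/m}\bigr)\bigr|^q.
\]
Combining the three displays above delivers precisely~\eqref{thm1-2012-GA-Ji-1}.

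The main ``obstacle'' is really just the asymmetric bookkeeping forced by the definition: one must recognize that $a^{1-t} b^t$ should be read as $b^t\cdot(a^{1/m})^{m(1-t)}$ rather than as a symmetric combination of $a$ and $b$, since the two slots in Definition~\ref{(a,m)-GA-convex-dfn} are not interchangeable. Once that identification is made, the remaining steps---the power-mean inequality, the closed evaluation of $\int_0^1 g(t)\,\td t$, and the linear splitting of the weighted integral against $t^\alpha$ and $1-t^\alpha$---are entirely routine. In particular, the statement of the theorem leaves the answer in terms of the explicit integral $G(\alpha,3)$, so no further estimation of that quantity is required to close the argument.
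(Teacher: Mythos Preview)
Your proof is correct and follows essentially the same route as the paper: take absolute values in Lemma~\ref{lem1-2012-GA-Ji}, apply the power-mean (H\"older) inequality with weight $a^{3(1-t)}b^{3t}$, rewrite $a^{1-t}b^t=(a^{1/m})^{m(1-t)}b^t$ to invoke the $(\alpha,m)$-GA-convexity of $|f'|^q$, and identify the resulting integrals with $L(a^3,b^3)$ and $G(\alpha,3)$. Aside from a harmless typo ($\td t$ for $\td x$ in your first display), this matches the paper's argument step for step.
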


\begin{proof}
Making use of the $(\alpha,m)$-GA-convexity of $|f'(x)|^q$ on $\bigl[0,\max\bigl\{a^{1/m},b\bigr\}\bigr]$, Lemma~\ref{lem1-2012-GA-Ji}, and H\"older inequality yields
\begin{align*}
&\quad\biggl|\frac{b^2f(b)-a^2f(a)}{2}-\int_a^bxf(x)\td x\biggr| \le \frac{\ln b-\ln a}{2}\int_0^1a^{3(1-t)}b^{3t}\bigl|f'\bigl(a^{1-t}b^t\bigr)\bigr|\td t \\
&\le \frac{\ln b-\ln a}{2}\biggl[\int_0^1a^{3(1-t)}b^{3t}\td t\biggr]^{1-1/q}
\biggl[\int_0^1a^{3(1-t)}b^{3t}\Bigl|f'\Bigl(\bigl(a^{1/m}\bigr)^{m(1-t)}b^t\Bigr)\Bigr|^q\td t\biggr]^{1/q}\\
&\le \frac{\ln b-\ln a}{2}\biggl(\frac{b^3-a^3}{\ln b^3-\ln a^3}\biggr)^{1-1/ q}\\
&\quad\times\biggl[\int_0^1a^{3(1-t)}b^{3t}\bigl(t^\alpha|f'(b)|^q +m(1-t^\alpha)\bigl|f'\bigl(a^{1/m}\bigr)\bigr|^q\bigr)\td t\biggr]^{1/q}\\
&=\frac{\ln b-\ln a}{2}\bigl[L\bigl(a^3,b^3\bigr)\bigr]^{1-1/q}\\
&\quad\times\bigl\{m\bigl[L\bigl(a^3,b^3\bigr)-G(\alpha,3)\bigr] \bigl|f'\bigl(a^{1/m}\bigr)\bigr|^q+G(\alpha,3)|f'(b)|^q\bigr\}^{1/q}.
\end{align*}
As a result, the inequality~\eqref{thm1-2012-GA-Ji-1} follows. The proof of Theorem~\ref{thm1-2012-GA-Ji} is complete.
\end{proof}

\begin{cor}\label{cor-3.1-1-2012-GA-Ji}
Under the conditions of Theorem~\ref{thm1-2012-GA-Ji}, if $q=1$, then
\begin{multline}
\biggl|\frac{b^2f(b)-a^2f(a)}2-\int_a^bxf(x)\td x\biggr|\\
\le \frac{\ln b-\ln a}{2}\bigl\{m\bigl[L\bigl(a^3,b^3\bigr)-G(\alpha,3)\bigr] \bigl|f'\bigl(a^{1/m}\bigr)\bigr|
 + G(\alpha,3)|f'(b)|\bigr\}.
\end{multline}
\end{cor}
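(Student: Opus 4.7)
The plan is to obtain this as an immediate specialization of Theorem~\ref{thm1-2012-GA-Ji} by setting $q=1$. When $q=1$, the exponent $1-1/q$ collapses to $0$, so the factor $[L(a^3,b^3)]^{1-1/q}$ reduces to $1$; simultaneously $1/q=1$, so the outer $q$-th root in front of the braced expression in~\eqref{thm1-2012-GA-Ji-1} disappears. Substituting these two simplifications into~\eqref{thm1-2012-GA-Ji-1} produces exactly the stated bound of the corollary.

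For a self-contained argument one can simply redo the proof of Theorem~\ref{thm1-2012-GA-Ji} without the Hölder step, which becomes superfluous when $q=1$. Starting from Lemma~\ref{lem1-2012-GA-Ji}, I would move the absolute value inside the integral and invoke the $(\alpha,m)$-GA-convexity of $|f'|$. The key identity, already exploited in the proof of the theorem, is the rewriting $a^{1-t}b^t=\bigl(a^{1/m}\bigr)^{m(1-t)}b^t$, which allows the convexity inequality~\eqref{(a,m)-GA-convex-dfn-eq} to be applied to the point $a^{1-t}b^t$ using the reference points $b$ and $a^{1/m}$, yielding
\begin{equation*}
\bigl|f'\bigl(a^{1-t}b^t\bigr)\bigr|\le t^\alpha|f'(b)|+m(1-t^\alpha)\bigl|f'\bigl(a^{1/m}\bigr)\bigr|.
\end{equation*}
Multiplying by $a^{3(1-t)}b^{3t}$ and integrating over $t\in[0,1]$, the two resulting integrals are recognized via the definitions~\eqref{2012-GA-Ji} and~\eqref{log-mean-dfn-eq} as $G(\alpha,3)$ and $L(a^3,b^3)-G(\alpha,3)$, respectively, since $\int_0^1 a^{3(1-t)}b^{3t}\td t=\frac{b^3-a^3}{\ln b^3-\ln a^3}=L(a^3,b^3)$.

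No real obstacle is expected here: the corollary is essentially a notation cleanup of Theorem~\ref{thm1-2012-GA-Ji} in the degenerate case where Hölder's inequality collapses to an equality.
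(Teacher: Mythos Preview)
Your proposal is correct and matches the paper's approach exactly: the corollary is stated in the paper without proof, as an immediate specialization of Theorem~\ref{thm1-2012-GA-Ji} obtained by putting $q=1$ so that both the prefactor $[L(a^3,b^3)]^{1-1/q}$ and the outer $1/q$-th power trivialize. Your optional self-contained rederivation is also fine and simply reproduces the theorem's argument with the H\"older step omitted.
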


\begin{cor}\label{cor-3.1-2-2012-GA-Ji}
Under the conditions of Theorem~\ref{thm1-2012-GA-Ji}, if $\alpha=1$, then
\begin{multline}\label{cor-3.1-2-2012-GA-Ji-1}
\biggl|\frac{b^2f(b)-a^2f(a)}2-\int_a^bxf(x)\td x\biggr|\le\frac{\bigl(b^3-a^3\bigr)^{1-1/q}}{6} \\
\times\Bigl\{m\bigl[L\bigl(a^3,b^3\bigr)-a^3\bigr] \bigl|f'\bigl(a^{1/m}\bigr)\bigr|^q
+ \bigl[b^3-L\bigl(a^3,b^3\bigr)\bigr]|f'(b)|^q\Bigr\}^{1/q}.
\end{multline}
\end{cor}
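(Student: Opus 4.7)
The plan is to specialise Theorem~\ref{thm1-2012-GA-Ji} to $\alpha=1$ and then carry out the explicit evaluation of $G(1,3)$ together with the algebraic simplification of the outer prefactor. Since all hypotheses of the corollary coincide with those of the theorem, the only real work is bookkeeping: plug $\alpha=1$ into \eqref{thm1-2012-GA-Ji-1}, replace $G(1,3)$ by a closed form, and collapse the resulting expression.

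First I would compute $G(1,3)=\int_0^1 t\,a^{3(1-t)}b^{3t}\td t$ by writing $a^{3(1-t)}b^{3t}=a^3 e^{ct}$ with $c=\ln b^3-\ln a^3=3(\ln b-\ln a)$ and applying one integration by parts, which gives
\begin{equation*}
G(1,3)=\frac{b^3}{c}-\frac{b^3-a^3}{c^2}=\frac{b^3-L(a^3,b^3)}{3(\ln b-\ln a)},
\end{equation*}
using the definition \eqref{log-mean-dfn-eq} of $L$. Subtracting from $L(a^3,b^3)$ and using $cL(a^3,b^3)=b^3-a^3$ yields the companion identity
\begin{equation*}
L(a^3,b^3)-G(1,3)=\frac{L(a^3,b^3)-a^3}{3(\ln b-\ln a)}.
\end{equation*}

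Second, I would factor $[3(\ln b-\ln a)]^{-1/q}$ out of the brace in \eqref{thm1-2012-GA-Ji-1} and combine it with $[L(a^3,b^3)]^{1-1/q}=\bigl[(b^3-a^3)/(3(\ln b-\ln a))\bigr]^{1-1/q}$ and the front factor $(\ln b-\ln a)/2$. A direct cancellation of the $\ln b-\ln a$ powers leaves precisely $(b^3-a^3)^{1-1/q}/6$, which is the prefactor on the right-hand side of \eqref{cor-3.1-2-2012-GA-Ji-1}; the brace then matches the one stated in the corollary.

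The only potential obstacle is making sure the exponents $1-1/q$ and $-1/q$ on the powers of $3(\ln b-\ln a)$ combine to give exactly $(\ln b-\ln a)^{-1}$, so that it cancels with the $(\ln b-\ln a)/2$ out front and yields the clean constant $1/6$. Once that arithmetic is confirmed, the corollary follows immediately from Theorem~\ref{thm1-2012-GA-Ji}.
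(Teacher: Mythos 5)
Your proposal is correct and follows essentially the same route as the paper, which simply records the identity $G(1,3)=\int_0^1ta^{3(1-t)}b^{3t}\td t=\bigl[b^3-L\bigl(a^3,b^3\bigr)\bigr]/[3(\ln b-\ln a)]$ and leaves the remaining substitution into~\eqref{thm1-2012-GA-Ji-1} as immediate. Your explicit verification of the companion identity $L\bigl(a^3,b^3\bigr)-G(1,3)=\bigl[L\bigl(a^3,b^3\bigr)-a^3\bigr]/[3(\ln b-\ln a)]$ and of the cancellation of the powers of $3(\ln b-\ln a)$ down to the constant $1/6$ is exactly the bookkeeping the paper omits, and it checks out.
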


\begin{proof}
This follows from the fact that
\begin{equation*}
G(1,3)=\int_0^1ta^{3(1-t)}b^{3t}\td t =\frac{b^3-L\bigl(a^3,b^3\bigr)}{3(\ln b-\ln a)}.
\end{equation*}
The proof of Corollary~\ref{cor-3.1-2-2012-GA-Ji} is complete.
\end{proof}

\begin{cor}\label{cor-3.1-3-2012-GA-Ji}
Under the conditions of Theorem~\ref{thm1-2012-GA-Ji}, we have
\begin{multline}\label{cor-3.1-3pineq1}
\biggl|\frac{b^2f(b)-a^2f(a)}2-\int_a^bxf(x)\td x\biggr|
\le \frac{\ln b-\ln a}{2}\bigl[L\bigl(a^3,b^3\bigr)\bigr]^{1-1/q} \\
\times\biggl(\frac{1}{\alpha+1}\biggr)^{1/q}
\bigl\{m\bigl[(\alpha+1)L\bigl(a^3,b^3\bigr)-b^3\bigr] \bigl|f'\bigl(a^{1/m}\bigr)\bigr|^q+b^3|f'(b)|^q\bigr\}^{1/q}
\end{multline}
and
\begin{equation}\label{cor-3.1-3pineq2}
\biggl|\frac{b^2f(b)-a^2f(a)}2-\int_a^bxf(x)\td x\biggr|
\le \frac{\ln b-\ln a}{2}L\bigl(a^3,b^3\bigr)|f'(b)|.
\end{equation}
\end{cor}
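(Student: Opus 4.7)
My plan is to derive both inequalities from Theorem~\ref{thm1-2012-GA-Ji} by replacing the integral $G(\alpha,3)$ with an explicit upper bound. Since $0<a<b$ forces $a^{3(1-t)}b^{3t}\le b^3$ for every $t\in[0,1]$, the first step is the elementary estimate
\[G(\alpha,3) = \int_0^1 t^\alpha a^{3(1-t)}b^{3t}\,\td t \le b^3\int_0^1 t^\alpha\,\td t = \frac{b^3}{\alpha+1}.\]

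For \eqref{cor-3.1-3pineq1}, I would substitute this bound into Theorem~\ref{thm1-2012-GA-Ji}: replace $G(\alpha,3)$ by $b^3/(\alpha+1)$ in the term $G(\alpha,3)|f'(b)|^q$ and correspondingly substitute $L(a^3,b^3)-G(\alpha,3)$ by $L(a^3,b^3)-b^3/(\alpha+1)$ in the coefficient of $m|f'(a^{1/m})|^q$. Factoring $1/(\alpha+1)$ out of the brace rewrites it as
\[\frac{1}{\alpha+1}\bigl\{m[(\alpha+1)L(a^3,b^3)-b^3]|f'(a^{1/m})|^q+b^3|f'(b)|^q\bigr\},\]
and extracting $q$-th roots while keeping the H\"older factor $[L(a^3,b^3)]^{1-1/q}$ yields the claim. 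For \eqref{cor-3.1-3pineq2}, I would return to Lemma~\ref{lem1-2012-GA-Ji}, take absolute values, and use the $(\alpha,m)$-GA-convexity of $|f'|$ with $x=b$, $y=a^{1/m}$, $\lambda=t$ to obtain $|f'(a^{1-t}b^t)|\le t^\alpha|f'(b)|+m(1-t^\alpha)|f'(a^{1/m})|$; bounding the second summand by $|f'(b)|$ then leaves the integral $|f'(b)|\int_0^1 a^{3(1-t)}b^{3t}\,\td t=L(a^3,b^3)|f'(b)|$.

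The main obstacle, in both cases, is the legitimacy of the final sign-tracking: for \eqref{cor-3.1-3pineq1} the substitution increases the coefficient of $|f'(b)|^q$ but decreases that of $m|f'(a^{1/m})|^q$, so the resulting expression is a true upper bound precisely when $|f'(b)|^q\ge m|f'(a^{1/m})|^q$, and the analogous dominance condition drives the closing step of \eqref{cor-3.1-3pineq2}. Articulating or invoking this monotonicity condition, which is the natural regime for such Hermite-Hadamard type estimates, is the delicate piece of bookkeeping; the remaining steps are direct computation.
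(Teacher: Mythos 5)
Your strategy is exactly the paper's: it bounds $G(\alpha,3)\le b^3/(\alpha+1)$ (from $a^{3(1-t)}b^{3t}\le b^3$) to get \eqref{cor-3.1-3pineq1} and $G(\alpha,3)\le L\bigl(a^3,b^3\bigr)$ (from $t^\alpha\le1$) to get \eqref{cor-3.1-3pineq2}, and then substitutes these upper bounds into \eqref{thm1-2012-GA-Ji-1}. The obstacle you isolate is genuine, and the paper does not resolve it either: writing the brace in \eqref{thm1-2012-GA-Ji-1} as
\begin{equation*}
mL\bigl(a^3,b^3\bigr)\bigl|f'\bigl(a^{1/m}\bigr)\bigr|^q+G(\alpha,3)\bigl(|f'(b)|^q-m\bigl|f'\bigl(a^{1/m}\bigr)\bigr|^q\bigr),
\end{equation*}
one sees that it is increasing in $G(\alpha,3)$ precisely when $|f'(b)|^q\ge m\bigl|f'\bigl(a^{1/m}\bigr)\bigr|^q$, so replacing $G(\alpha,3)$ by a larger quantity enlarges the right-hand side only under that dominance condition, which is not among the hypotheses of Theorem~\ref{thm1-2012-GA-Ji}. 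Your alternative route to \eqref{cor-3.1-3pineq2}, estimating the integrand pointwise via $t^\alpha|f'(b)|+m(1-t^\alpha)\bigl|f'\bigl(a^{1/m}\bigr)\bigr|\le|f'(b)|$, needs exactly the same condition, so it does not evade the issue.

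The gap is not cosmetic. Since $L\bigl(a^3,b^3\bigr)\to0$ as $a\to0^+$ with $b$ fixed, the coefficient $(\alpha+1)L\bigl(a^3,b^3\bigr)-b^3$ in \eqref{cor-3.1-3pineq1} is negative for small $a$; and because GA-convex functions may be decreasing (for instance $-\ln x$ is GA-affine), the quantity $m\bigl|f'\bigl(a^{1/m}\bigr)\bigr|^q$ can exceed $|f'(b)|^q$ by enough to make the whole brace in \eqref{cor-3.1-3pineq1} negative, in which case its $q$-th root is not even defined. So the corollary really does need the extra hypothesis $|f'(b)|^q\ge m\bigl|f'\bigl(a^{1/m}\bigr)\bigr|^q$ (or some substitute for it), and your instinct to make that condition explicit is correct; neither your argument nor the paper's closes without it.
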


\begin{proof}
Using $\bigl(\frac{b}a\bigl)^{3t}\le\bigl(\frac{b}a\bigl)^3$ for $t\in[0,1]$ in~\eqref{2012-GA-Ji} gives
\begin{equation*}
G(\alpha,3)=a^3\int_0^1t^\alpha\biggl(\frac{b}a\biggr)^{3t}\td t\le\frac{b^3}{\alpha+1}.
\end{equation*}
Substituting this inequality into~\eqref{thm1-2012-GA-Ji-1} yields~\eqref{cor-3.1-3pineq1}.
\par
Utilizing $¡°t^\alpha\le1$ for $t\in[0,1]$ in~\eqref{2012-GA-Ji} reveals
\begin{equation*}
G(\alpha,3)\le \int_0^1a^{3(1-t)}b^{3t}\td t=L\bigl(a^3,b^3\bigr).
\end{equation*}
Combining this inequality with~\eqref{thm1-2012-GA-Ji-1} yields~\eqref{cor-3.1-3pineq2}.
Corollary \ref{cor-3.1-3-2012-GA-Ji} is thus proved.
\end{proof}

\begin{thm}\label{thm2-2012-GA-Ji}
Let $f:\mathbb{R}_0\to\mathbb{R}$ be a differentiable function and $f'\in L([a,b])$ with $0<a<b<\infty $. If $|f'|^q$ is $(\alpha,m)$-GA-convex on $\bigl[0,\max\bigl\{a^{1/m},b\bigr\}\bigr]$ for $(\alpha,m)\in (0,1]^2$ and $q>1$, then
\begin{multline}
\biggl|\frac{b^2f(b)-a^2f(a)}{2}-\int_a^bxf(x)\td x\biggr| \le \frac{\ln b-\ln a}{2}\biggl(\frac{1}{\alpha+1}\biggr)^{1/q}\\
\times\bigl[L\bigl(a^{3q/(q-1)},b^{3q/(q-1)}\bigr)\bigr]^{1-1/q}
\bigl[|f'(b)|^q+\alpha m\bigl|f'\bigl(a^{1/m}\bigr)\bigr|^q\bigr]^{1/q},
\end{multline}
where $L$ is defined by~\eqref{log-mean-dfn-eq}.
\end{thm}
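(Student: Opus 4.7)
The plan is to mimic the structure of the proof of Theorem~\ref{thm1-2012-GA-Ji} verbatim, except that I would apply H\"older's inequality differently: instead of splitting the exponential weight $a^{3(1-t)}b^{3t}$ between the two H\"older factors (which is what produces the $L\bigl(a^3,b^3\bigr)^{1-1/q}$ factor in~\eqref{thm1-2012-GA-Ji-1}), I would push the entire weight into the first factor. The right-hand side of the claim, which features $L\bigl(a^{3q/(q-1)},b^{3q/(q-1)}\bigr)^{1-1/q}$ together with the bare constant $\frac{1}{\alpha+1}=\int_0^1 t^\alpha\td t$, already dictates this pairing.

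Concretely, starting from Lemma~\ref{lem1-2012-GA-Ji} and moving absolute values inside the integral, I would apply H\"older's inequality with conjugate exponents $q/(q-1)$ and $q$ to write
\begin{equation*}
\int_0^1 a^{3(1-t)}b^{3t}\bigl|f'\bigl(a^{1-t}b^t\bigr)\bigr|\td t\le\biggl[\int_0^1 a^{3q(1-t)/(q-1)}b^{3qt/(q-1)}\td t\biggr]^{1-1/q}\biggl[\int_0^1\bigl|f'\bigl(a^{1-t}b^t\bigr)\bigr|^q\td t\biggr]^{1/q}.
\end{equation*}
The first bracket equals $L\bigl(a^{3q/(q-1)},b^{3q/(q-1)}\bigr)$ by the elementary identity $\int_0^1 A^{1-t}B^t\td t=(B-A)/(\ln B-\ln A)$ applied with $A=a^{3q/(q-1)}$ and $B=b^{3q/(q-1)}$.

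For the second bracket, I would rewrite $a^{1-t}b^t=\bigl(a^{1/m}\bigr)^{m(1-t)}b^t$ and invoke the $(\alpha,m)$-GA-convexity of $|f'|^q$ to obtain the pointwise bound
\begin{equation*}
\bigl|f'\bigl(a^{1-t}b^t\bigr)\bigr|^q\le t^\alpha|f'(b)|^q+m\bigl(1-t^\alpha\bigr)\bigl|f'\bigl(a^{1/m}\bigr)\bigr|^q.
\end{equation*}
Integrating and using $\int_0^1 t^\alpha\td t=\frac{1}{\alpha+1}$ yields $\frac{1}{\alpha+1}\bigl[|f'(b)|^q+\alpha m\bigl|f'\bigl(a^{1/m}\bigr)\bigr|^q\bigr]$. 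Multiplying by $(\ln b-\ln a)/2$ and taking the $1/q$-th power of the second factor assembles exactly the claimed inequality.

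No serious obstacle arises; the only point of care is bookkeeping the choice of H\"older split, since a careless split (as in Theorem~\ref{thm1-2012-GA-Ji}) would instead yield an $L\bigl(a^3,b^3\bigr)^{1-1/q}$ factor and not the desired $L\bigl(a^{3q/(q-1)},b^{3q/(q-1)}\bigr)^{1-1/q}$ factor. Everything else is a direct substitution.
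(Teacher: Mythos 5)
Your proposal is correct and coincides with the paper's own proof: the paper likewise applies H\"older's inequality with the full weight $a^{3(1-t)}b^{3t}$ placed in the factor with exponent $q/(q-1)$, producing $\bigl[L\bigl(a^{3q/(q-1)},b^{3q/(q-1)}\bigr)\bigr]^{1-1/q}$, and then bounds $\int_0^1\bigl|f'\bigl(a^{1-t}b^t\bigr)\bigr|^q\td t$ by $\frac{1}{\alpha+1}\bigl[|f'(b)|^q+\alpha m\bigl|f'\bigl(a^{1/m}\bigr)\bigr|^q\bigr]$ exactly as you do.
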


\begin{proof}
Since $|f'(x)|^q$ is $(\alpha,m)$-GA-convex on $\bigl[0,\max\bigl\{a^{1/m},b\bigr\}\bigr]$, from Lemma~\ref{lem1-2012-GA-Ji} and H\"older inequality, we have
\begin{align*}
&\quad\biggl|\frac{b^2f(b)-a^2f(a)}{2}-\int_a^bxf(x)\td x\biggr|
\le \frac{\ln b-\ln a}{2}\int_0^1a^{3(1-t)}b^{3t}\bigl|f'\bigl(a^{1-t}b^t\bigr)\bigr|\td t \\
&\le \frac{\ln b-\ln a}{2}\biggl[\int_0^1a^{3q/(q-1)(1-t)}b^{3q/(q-1)t}\td t\biggr]^{1-1/q}
\biggl[\int_0^1\Bigl|f'\Bigl(\bigl(a^{1/m}\bigr)^{m(1-t)}b^t\Bigr)\Bigr|^q\td t\biggr]^{1/q} \\
&\le \frac{\ln b-\ln a}{2}\biggl[\frac{b^{3q/(q-1)}-a^{3q/(q-1)}}{\ln b^{3q/(q-1)}
 -\ln a^{3q/(q-1)}}\biggr]^{1-1/q}\\
&\quad\times\biggl[\int_0^1\bigl(t^\alpha |f'(b)|^q+m(1-t^\alpha)\bigl|f'\bigl(a^{1/m}\bigr)\bigr|^q\bigr)\td t\biggl]^{1/q} \\
&= \frac{\ln b-\ln a}{2}\bigl[L\bigl(a^{3q/(q-1)},b^{3q/(q-1)}\bigr)\bigr]^{1-1/q}
\biggl[\frac{1}{\alpha + 1}|f'(b)|^q + \frac{\alpha m}{\alpha+1}\bigl|f'\bigl(a^{1/m}\bigr)\bigr|^q\biggr]^{1/q}.
\end{align*}
 The proof of Theorem~\ref{thm2-2012-GA-Ji} is complete.
\end{proof}

\begin{cor}\label{cor-3.2-2012-GA-Ji}
Under the conditions of Theorem~\ref{thm2-2012-GA-Ji}, if $\alpha=1$, then
\begin{multline}
\biggl|\frac{b^2f(b)-a^2f(a)}2-\int_a^bxf(x)\td x\biggr| \le \frac{\ln b-\ln a} {2^{1+1/q}}\\
\times\bigl[L\bigl(a^{3q/(q-1)}, b^{3q/(q-1)}\bigr)\bigr]^{1-1/q}\bigl[|f'(b)|^q +m\bigl|f'\bigl(a^{1/m}\bigr)\bigr|^q\bigr]^{1/q}.
\end{multline}
\end{cor}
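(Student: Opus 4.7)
The plan is to obtain Corollary~\ref{cor-3.2-2012-GA-Ji} as a direct specialization of Theorem~\ref{thm2-2012-GA-Ji} at the parameter value $\alpha=1$. First I would verify that the hypotheses of Theorem~\ref{thm2-2012-GA-Ji} are still satisfied: since $(1,m)\in(0,1]^2$ whenever $m\in(0,1]$, the assumption that $|f'|^q$ is $(1,m)$-GA-convex on $\bigl[0,\max\{a^{1/m},b\}\bigr]$ fits the theorem's framework, and the remaining conditions ($f$ differentiable on $\mathbb{R}_0$, $f'\in L([a,b])$, $0<a<b<\infty$, $q>1$) are copied verbatim from Theorem~\ref{thm2-2012-GA-Ji}.

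Next I would simply substitute $\alpha=1$ into the right-hand side of the inequality proved in Theorem~\ref{thm2-2012-GA-Ji}. The factor $\bigl(\tfrac{1}{\alpha+1}\bigr)^{1/q}$ collapses to $\bigl(\tfrac{1}{2}\bigr)^{1/q}=2^{-1/q}$, which when combined with the outer $\tfrac{\ln b-\ln a}{2}$ gives the claimed prefactor $\tfrac{\ln b-\ln a}{2^{1+1/q}}$. The coefficient $\alpha m$ inside the bracket reduces to $m$, and the logarithmic-mean factor $\bigl[L\bigl(a^{3q/(q-1)},b^{3q/(q-1)}\bigr)\bigr]^{1-1/q}$ does not involve $\alpha$ and is carried over unchanged. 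Matching these three pieces against the statement of Corollary~\ref{cor-3.2-2012-GA-Ji} completes the argument.

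There is no substantive obstacle to overcome: the entire proof is a one-line specialization, and the only routine check is the arithmetic combining the two powers of $2$ into $2^{1+1/q}$. Accordingly, I expect the written-out proof to consist of a single displayed calculation invoking Theorem~\ref{thm2-2012-GA-Ji} with $\alpha=1$, followed by the phrase ``which is the desired inequality.''
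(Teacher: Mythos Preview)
Your proposal is correct and matches the paper's approach exactly: the corollary is an immediate specialization of Theorem~\ref{thm2-2012-GA-Ji} at $\alpha=1$, and the paper does not even supply a separate proof. The only computation is the one you describe, combining $\tfrac{1}{2}$ with $\bigl(\tfrac{1}{2}\bigr)^{1/q}$ to get $2^{-(1+1/q)}$ and reducing $\alpha m$ to $m$.
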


\begin{thm}\label{thm3-1-2012-GA-Ji}
Let $f:\mathbb{R}_0\to\mathbb{R}$ be a differentiable function and $f'\in L([a,b])$ for $0<a<b<\infty $. If $|f'|^q$ is $(\alpha,m)$-GA-convex on $\bigl[0,\max\bigl\{a^{1/m},b\bigr\}\bigr]$ for $q>1$ and $(\alpha,m)\in (0,1]^2$, then
\begin{multline}
\biggl|\frac{b^2f(b)-a^2f(a)}{2}-\int_a^bxf(x)\td x\biggr| \le \frac{\ln b-\ln a}{2} \\
\times\bigl\{m\bigl[L\bigl(a^{3q},b^{3q}\bigr)-G(\alpha,3q)\bigr] \bigl|f'\bigl(a^{1/m}\bigr)\bigr|^q+G(\alpha,3q)|f'(b)|^q\bigr\}^{1/q},
\end{multline}
where $G$ and $L$ are respectively defined by~\eqref{2012-GA-Ji} and~\eqref{log-mean-dfn-eq}.
\end{thm}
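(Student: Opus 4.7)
The plan is to follow the same template as the proofs of Theorems \ref{thm1-2012-GA-Ji} and \ref{thm2-2012-GA-Ji}, namely invoke Lemma~\ref{lem1-2012-GA-Ji} to pass to the integral $\int_0^1 a^{3(1-t)}b^{3t}\bigl|f'\bigl(a^{1-t}b^t\bigr)\bigr|\td t$, then split it by H\"older, and finally apply the $(\alpha,m)$-GA-convexity hypothesis. The shape of the target inequality (no outer $[\,\cdot\,]^{1-1/q}$ factor, and $L$ and $G$ evaluated at exponent $3q$ rather than $3$ or $3q/(q-1)$) tells me which variant of H\"older is wanted.

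The key observation is that we should push the full weight $a^{3(1-t)}b^{3t}$ into the $q$-power bracket instead of distributing it. Writing the integrand as the product of $a^{3(1-t)}b^{3t}\bigl|f'\bigl(a^{1-t}b^t\bigr)\bigr|$ and $1$, and applying H\"older with conjugate exponents $q$ and $q/(q-1)$, I obtain
\begin{equation*}
\int_0^1 a^{3(1-t)}b^{3t}\bigl|f'\bigl(a^{1-t}b^t\bigr)\bigr|\td t
\le \biggl[\int_0^1 a^{3q(1-t)}b^{3qt}\bigl|f'\bigl(a^{1-t}b^t\bigr)\bigr|^q\td t\biggr]^{1/q}.
\end{equation*}
This is the step that manufactures the exponent $3q$ appearing in $L\bigl(a^{3q},b^{3q}\bigr)$ and $G(\alpha,3q)$.

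Next, I rewrite $a^{1-t}b^t=\bigl(a^{1/m}\bigr)^{m(1-t)}b^t$ and apply the $(\alpha,m)$-GA-convexity of $|f'|^q$ on $\bigl[0,\max\bigl\{a^{1/m},b\bigr\}\bigr]$ to bound
\begin{equation*}
\bigl|f'\bigl(a^{1-t}b^t\bigr)\bigr|^q \le t^\alpha|f'(b)|^q+m\bigl(1-t^\alpha\bigr)\bigl|f'\bigl(a^{1/m}\bigr)\bigr|^q.
\end{equation*}
Substituting and using linearity, the resulting integral splits into $|f'(b)|^q\int_0^1 t^\alpha a^{3q(1-t)}b^{3qt}\td t$ and $m\bigl|f'\bigl(a^{1/m}\bigr)\bigr|^q\int_0^1 (1-t^\alpha)a^{3q(1-t)}b^{3qt}\td t$. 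By the definitions \eqref{2012-GA-Ji} and \eqref{log-mean-dfn-eq}, these equal $|f'(b)|^q G(\alpha,3q)$ and $m\bigl|f'\bigl(a^{1/m}\bigr)\bigr|^q \bigl[L\bigl(a^{3q},b^{3q}\bigr)-G(\alpha,3q)\bigr]$, respectively. Taking the $q$-th root, multiplying by $\frac{\ln b-\ln a}{2}$, and combining with Lemma~\ref{lem1-2012-GA-Ji} yields the claimed bound.

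No step here presents a genuine obstacle; the only thing worth being careful about is the H\"older pairing at the start, since the naive split used in Theorems~\ref{thm1-2012-GA-Ji} and~\ref{thm2-2012-GA-Ji} produces the wrong exponents and leaves an unwanted $[\,\cdot\,]^{1-1/q}$ prefactor. Pairing the entire weighted integrand against the constant function $1$ is the crucial twist that makes the factor $\int_0^1 1\,\td t = 1$ disappear and places $a^{3q(1-t)}b^{3qt}$ inside the $q$-power, after which identification of $G(\alpha,3q)$ and $L\bigl(a^{3q},b^{3q}\bigr)-G(\alpha,3q)$ is immediate.
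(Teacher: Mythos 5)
Your proposal is correct and follows essentially the same route as the paper: the paper likewise applies H\"older with the pairing $1\cdot a^{3(1-t)}b^{3t}\bigl|f'\bigl(a^{1-t}b^t\bigr)\bigr|$, producing the harmless factor $\bigl(\int_0^1 1\,\td t\bigr)^{1-1/q}$ and placing the full weight $a^{3q(1-t)}b^{3qt}$ inside the $q$-power bracket, then invokes the $(\alpha,m)$-GA-convexity of $|f'|^q$ and identifies the resulting integrals as $G(\alpha,3q)$ and $L\bigl(a^{3q},b^{3q}\bigr)-G(\alpha,3q)$. Your explicit remark about why the naive H\"older split would give the wrong exponents is a helpful clarification, but the argument itself is the same.
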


\begin{proof}
Since $|f'(x)|^q$ is $(\alpha,m)$-GA-convex on $\bigl[0,\max\bigl\{a^{1/m},b\bigr\}\bigr]$, from Lemma~\ref{lem1-2012-GA-Ji} and H\"older inequality, we have
\begin{align*}
&\quad\biggl|\frac{b^2f(b)-a^2f(a)}{2}-\int_a^bxf(x)\td x\biggr| \\
&\le\frac{\ln b-\ln a}{2}\biggl(\int_0^11\td t\biggr)^{1-1/q}
\biggl[\int_0^1a^{3q(1-t)}b^{3qt}\Bigl|f'\Bigl(\bigl(a^{1/m}\bigr)^{m(1-t)}b^t\Bigr)\Bigr|^q\td t\biggr]^{1/q} \\
&\le\frac{\ln b-\ln a}{2}\bigl[mL\bigl(a^{3q},b^{3q}\bigr) \bigl|f'\bigl(a^{1/m}\bigr)\bigr|^q+G(\alpha,3q)
\bigl(|f'(b)|^q-m\bigl|f'\bigl(a^{1/m}\bigr)\bigr|^q\bigr)\bigr]^{1/q}.
\end{align*}
 The proof of Theorem~\ref{thm3-1-2012-GA-Ji} is complete.
\end{proof}

\begin{cor}\label{cor-3-1.3-2012-GA-Ji}
Under the conditions of Theorem~\ref{thm3-1-2012-GA-Ji}, if $\alpha=1$, then
\begin{multline}
\biggl|\frac{b^2f(b)-a^2f(a)}{2}-\int_a^bxf(x)\td x\biggr|
\le\frac{(\ln b-\ln a)^{1-1/q}}{2}\biggl(\frac{1}{3q}\biggr)^{1/q} \\
\times \bigl\{m\bigl[L\bigl(a^{3q},b^{3q}\bigr)-a^{3q}\bigr]\bigl|f'\bigl(a^{1/m}\bigr)\bigr|^q
+ \bigl[b^{3q}-L\bigl(a^{3q},b^{3q}\bigr)\bigr]|f'(b)|^q\bigr\}^{1/q}.
\end{multline}
\end{cor}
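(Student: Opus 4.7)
The plan is to derive the corollary directly from Theorem \ref{thm3-1-2012-GA-Ji} by setting $\alpha = 1$ and evaluating $G(1,3q)$ in closed form. Setting $\alpha=1$ in the bound leaves two coefficients, $L(a^{3q},b^{3q})-G(1,3q)$ and $G(1,3q)$, inside the brace; once each of these is expressed as a simple rational combination of $L(a^{3q},b^{3q})$, $a^{3q}$, and $b^{3q}$, divided by the common factor $3q(\ln b-\ln a)$, the rest is bookkeeping with the $q$-th root.

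The one substantive computation is the evaluation
\begin{equation*}
G(1,3q) = \int_0^1 t\, a^{3q(1-t)}b^{3qt}\td t,
\end{equation*}
which is a single integration by parts (taking $u=t$ and $\td v = a^{3q(1-t)}b^{3qt}\td t$), exactly analogous to the computation $G(1,3)=\frac{b^3-L(a^3,b^3)}{3(\ln b-\ln a)}$ carried out in the proof of Corollary~\ref{cor-3.1-2-2012-GA-Ji}. I expect to obtain
\begin{equation*}
G(1,3q) = \frac{b^{3q}-L\bigl(a^{3q},b^{3q}\bigr)}{3q(\ln b-\ln a)}.
\end{equation*}

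Using the definition $L(a^{3q},b^{3q}) = \frac{b^{3q}-a^{3q}}{3q(\ln b-\ln a)}$ in~\eqref{log-mean-dfn-eq}, a one-line subtraction then gives
\begin{equation*}
L\bigl(a^{3q},b^{3q}\bigr)-G(1,3q) = \frac{L\bigl(a^{3q},b^{3q}\bigr)-a^{3q}}{3q(\ln b-\ln a)}.
\end{equation*}
Plugging these two identities into the bound of Theorem~\ref{thm3-1-2012-GA-Ji} with $\alpha=1$, I factor $\frac{1}{3q(\ln b-\ln a)}$ out of the braces, which emerges from the $q$-th root as $\bigl(\frac{1}{3q(\ln b-\ln a)}\bigr)^{1/q}$. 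Combining this factor with the prefactor $\frac{\ln b-\ln a}{2}$ produces precisely $\frac{(\ln b-\ln a)^{1-1/q}}{2}\bigl(\frac{1}{3q}\bigr)^{1/q}$, matching the stated form. No real obstacle is anticipated; the only care needed is keeping track of the $3q(\ln b-\ln a)$ factors when pulling them through the $1/q$ power.
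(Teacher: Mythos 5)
Your proposal is correct and follows essentially the same route as the paper: the paper's proof consists precisely of the evaluation $G(1,3q)=\frac{b^{3q}-L(a^{3q},b^{3q})}{\ln b^{3q}-\ln a^{3q}}$ followed by the substitution into Theorem~\ref{thm3-1-2012-GA-Ji}. Your intermediate identity $L(a^{3q},b^{3q})-G(1,3q)=\frac{L(a^{3q},b^{3q})-a^{3q}}{3q(\ln b-\ln a)}$ and the extraction of the factor $\bigl(\frac{1}{3q(\ln b-\ln a)}\bigr)^{1/q}$ check out, merely making explicit the bookkeeping the paper leaves to the reader.
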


\begin{proof}
From
\begin{equation*}
G(1,3q) = \int_0^1 ta^{3q(1-t)}b^{3qt} \td t
= \frac{b^{3q}-L\bigl(a^{3q},b^{3q}\bigr)}{\ln b^{3q}-\ln a^{3q}},
\end{equation*}
Corollary \ref{cor-3-1.3-2012-GA-Ji} follows.
\end{proof}

\begin{thm}\label{thm3-2012-GA-Ji}
Let $f:\mathbb{R}_0\to\mathbb{R}$ be a differentiable function and $f'\in L([a,b])$ for $0<a<b<\infty $. If $|f'|^q$ is $(\alpha,m)$-GA-convex on $\bigl[0,\max\bigl\{a^{1/m},b\bigr\}\bigr]$ for $q>1$, $q> p>0$, and $(\alpha,m)\in (0,1]^2$, then
\begin{multline}
\biggl|\frac{b^2f(b)-a^2f(a)}{2}-\int_a^bxf(x)\td x\biggr|
\le \frac{\ln b-\ln a}{2}\bigl[L\bigl(a^{3(q-p)/(q-1)},b^{3(q-p)/(q-1)}\bigr)\bigr]^{1-1/q} \\
\times \bigl\{m\bigl[L\bigl(a^{3p},b^{3p}\bigr)-G(\alpha,3p)\bigr]\bigl|f'\bigl(a^{1/m}\bigr)\bigr|^q +G(\alpha,3p)|f'(b)|^q\bigr\}^{1/q},
\end{multline}
where $G$ and $L$ are respectively defined by~\eqref{2012-GA-Ji} and~\eqref{log-mean-dfn-eq}.
\end{thm}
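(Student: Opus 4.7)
The plan is to mimic the proofs of Theorems~\ref{thm1-2012-GA-Ji} and~\ref{thm3-1-2012-GA-Ji}, but to split the weight $a^{3(1-t)}b^{3t}$ asymmetrically using the auxiliary parameter $p$ so that H\"older's inequality produces the logarithmic mean with exponent $3(q-p)/(q-1)$ outside and $G(\alpha,3p)$ inside, as demanded by the statement.

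First, I would invoke Lemma~\ref{lem1-2012-GA-Ji} to reduce the left-hand side to
$$\frac{\ln b-\ln a}{2}\int_0^1 a^{3(1-t)}b^{3t}\bigl|f'\bigl(a^{1-t}b^t\bigr)\bigr|\td t.$$
The key step is the factorization: since $3=3(q-p)/q+3p/q$, I would write
$$a^{3(1-t)}b^{3t}=\bigl[a^{3(q-p)/q\,(1-t)}b^{3(q-p)/q\cdot t}\bigr]\cdot\bigl[a^{3p/q\,(1-t)}b^{3p/q\cdot t}\bigr]$$
and apply H\"older's inequality with conjugate exponents $q/(q-1)$ and $q$, absorbing $|f'(a^{1-t}b^t)|$ into the second factor. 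Raising the first factor to the power $q/(q-1)$ rescales its weight exponent to $3(q-p)/(q-1)$, and the integral of the resulting weight is exactly $L\bigl(a^{3(q-p)/(q-1)},b^{3(q-p)/(q-1)}\bigr)$ by~\eqref{log-mean-dfn-eq}.

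For the second factor raised to the power $q$ the weight becomes $a^{3p(1-t)}b^{3pt}$. Rewriting $a^{1-t}b^t=\bigl(a^{1/m}\bigr)^{m(1-t)}b^t$ and invoking the $(\alpha,m)$-GA-convexity~\eqref{(a,m)-GA-convex-dfn-eq} of $|f'|^q$, exactly as in the proof of Theorem~\ref{thm1-2012-GA-Ji}, bounds
$$\bigl|f'\bigl(a^{1-t}b^t\bigr)\bigr|^q\le t^\alpha|f'(b)|^q+m(1-t^\alpha)\bigl|f'\bigl(a^{1/m}\bigr)\bigr|^q.$$
Integrating against $a^{3p(1-t)}b^{3pt}$ and using both $\int_0^1 a^{3p(1-t)}b^{3pt}\td t=L\bigl(a^{3p},b^{3p}\bigr)$ and the definition~\eqref{2012-GA-Ji} of $G(\alpha,3p)$ yields the bracketed quantity
$$G(\alpha,3p)|f'(b)|^q+m\bigl[L\bigl(a^{3p},b^{3p}\bigr)-G(\alpha,3p)\bigr]\bigl|f'\bigl(a^{1/m}\bigr)\bigr|^q.$$

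The only real obstacle is guessing the correct split in the key step; once one recognizes that the choice $A=3(q-p)/q$, $B=3p/q$ of the two weight exponents is dictated by matching the outside and inside factors in the claim, all subsequent manipulations are routine and parallel those already carried out. The hypothesis $q>p>0$ is exactly what keeps both $A$ and $B$ positive, so that the two logarithmic means are well defined and $G(\alpha,3p)$ makes sense, while $q>1$ is of course needed for H\"older. Assembling the three pieces and multiplying by $(\ln b-\ln a)/2$ yields the claimed inequality, which recovers Theorem~\ref{thm1-2012-GA-Ji} at $p=1$.
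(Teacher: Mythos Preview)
Your proposal is correct and follows essentially the same route as the paper: invoke Lemma~\ref{lem1-2012-GA-Ji}, split the weight as $a^{3(1-t)}b^{3t}=a^{3(q-p)/q\,(1-t)}b^{3(q-p)/q\,t}\cdot a^{3p/q\,(1-t)}b^{3p/q\,t}$, apply H\"older with exponents $q/(q-1)$ and $q$, and then use the $(\alpha,m)$-GA-convexity of $|f'|^q$ together with the identities $\int_0^1 a^{\ell(1-t)}b^{\ell t}\td t=L(a^\ell,b^\ell)$ and~\eqref{2012-GA-Ji}. The paper's proof is identical in structure and computation.
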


\begin{proof}
Since $|f'(x)|^q$ is $(\alpha,m)$-GA-convex on $\bigl[0,\max\bigl\{a^{1/m},b\bigr\}\bigr]$, from Lemma~\ref{lem1-2012-GA-Ji} and H\"older inequality, we have
\begin{align*}
&\quad\biggl|\frac{b^2f(b)-a^2f(a)}{2}-\int_a^bxf(x)\td x\biggr| \\
&\le \frac{\ln b-\ln a}{2}\biggl[\int_0^1a^{3(q-p)/(q-1)(1-t)}b^{3(q-p)/(q-1)t}\td t\biggr]^{1-1/q} \\
&\quad\times\biggl[\int_0^1a^{3p(1-t)}b^{3pt} \Bigl|f'\Bigl(\bigl(a^{1/m}\bigr)^{m(1-t)}b^t\Bigr)\Bigr|^q\td t\biggr]^{1/q} \\
 &\le \frac{\ln b-\ln a}{2}\biggl[\frac{b^{3(q-p)/(q-1)}-a^{3(q-p)/(q-1)}}{\ln b^{3(q-p)/(q-1)}
 -\ln a^{3(q-p)/(q-1)}}\biggr]^{1-1/q} \\
&\quad\times\biggl[\int_0^1a^{3p(1-t)}b^{3pt}\bigl(t^\alpha |f'(b)|^q
+m(1-t^\alpha )\bigl|f'\bigl(a^{1/m}\bigr)\bigr|^q\bigr)\td t\biggr]^{1/q} \\
 &=\frac{\ln b-\ln a}{2}\bigl[L\bigl(a^{3(q-p)/(q-1)},b^{3(q-p)/(q-1)}\bigr)\bigr]^{1-1/q} \\
&\quad\times \bigl[mL\bigl(a^{3p},b^{3p}\bigr)\bigl|f'\bigl(a^{1/m}\bigr)\bigr|^q+G(\alpha,3p)
\bigl(|f'(b)|^q-m\bigl|f'\bigl(a^{1/m}\bigr)\bigr|^q\bigr)\bigr]^{1/q}.
\end{align*}
 The proof of Theorem~\ref{thm3-2012-GA-Ji} is complete.
\end{proof}

\begin{cor}\label{cor-3.3-2012-GA-Ji}
Under the conditions of Theorem~\ref{thm3-2012-GA-Ji}, if $\alpha=1$, then
\begin{multline}
\biggl|\frac{b^2f(b)-a^2f(a)}{2}-\int_a^bxf(x)\td x\biggr| \le \frac{(\ln b-\ln a)^{1-1/q}}{2}\biggl(\frac{1}{3p}\biggr)^{1/q}\\
\times\bigl[L\bigl(a^{3(q-p)/(q-1)},b^{3(q-p)/(q-1)}\bigr)\bigr]^{1-1/q} \\
\times \bigl\{m\bigl[L\bigl(a^{3p},b^{3p}\bigr)-a^{3p}\bigr]\bigl|f'\bigl(a^{1/m}\bigr)\bigr|^q
+ \bigl[b^{3p}-L\bigl(a^{3p},b^{3p}\bigr)\bigr]|f'(b)|^q\bigr\}^{1/q}.
\end{multline}
\end{cor}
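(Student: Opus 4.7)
The plan is to deduce Corollary~\ref{cor-3.3-2012-GA-Ji} directly from Theorem~\ref{thm3-2012-GA-Ji} by specializing $\alpha=1$; the only nontrivial ingredient is an explicit evaluation of $G(1,3p)$, after which everything reduces to algebraic rearrangement.

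First, I would compute
\[
G(1,3p)=\int_0^1 t\,a^{3p(1-t)}b^{3pt}\td t
\]
by integration by parts, taking $u=t$ and $\td v=a^{3p(1-t)}b^{3pt}\td t$. This is the same calculation as in the proof of Corollary~\ref{cor-3-1.3-2012-GA-Ji} with $q$ replaced by $p$, and it yields
\[
G(1,3p)=\frac{b^{3p}-L\bigl(a^{3p},b^{3p}\bigr)}{3p(\ln b-\ln a)}.
\]
Combined with the identity $L\bigl(a^{3p},b^{3p}\bigr)=\frac{b^{3p}-a^{3p}}{3p(\ln b-\ln a)}$ read off from~\eqref{log-mean-dfn-eq}, this gives
\[
L\bigl(a^{3p},b^{3p}\bigr)-G(1,3p)=\frac{L\bigl(a^{3p},b^{3p}\bigr)-a^{3p}}{3p(\ln b-\ln a)}.
\]

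Next, I would substitute both of these expressions into the bound furnished by Theorem~\ref{thm3-2012-GA-Ji} and pull the common factor $[3p(\ln b-\ln a)]^{-1}$ out of the inner braces $\{\cdot\}^{1/q}$. Combining the resulting external factor $[3p(\ln b-\ln a)]^{-1/q}$ with the prefactor $(\ln b-\ln a)/2$ produces exactly $\frac{(\ln b-\ln a)^{1-1/q}}{2}\bigl(\frac{1}{3p}\bigr)^{1/q}$; together with the unchanged factor $\bigl[L\bigl(a^{3(q-p)/(q-1)},b^{3(q-p)/(q-1)}\bigr)\bigr]^{1-1/q}$, this reproduces the inequality claimed in Corollary~\ref{cor-3.3-2012-GA-Ji}.

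I do not expect any real obstacle beyond careful bookkeeping: one must verify that the exponent of $\ln b-\ln a$ collapses cleanly from $1$ to $1-1/q$ and that the $(1/(3p))^{1/q}$ factor emerges without stray constants. No new analytic estimate beyond those already embedded in Theorem~\ref{thm3-2012-GA-Ji} is required, so the corollary is essentially a direct specialization of that theorem rather than a separate argument.
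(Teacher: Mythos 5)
Your proposal is correct and matches the paper's own argument: the paper likewise evaluates $G(1,3p)=\bigl[b^{3p}-L\bigl(a^{3p},b^{3p}\bigr)\bigr]/\bigl[3p(\ln b-\ln a)\bigr]$ and substitutes it into Theorem~\ref{thm3-2012-GA-Ji}, leaving the bookkeeping to the reader. Your version simply spells out the algebra (the identity for $L\bigl(a^{3p},b^{3p}\bigr)-G(1,3p)$ and the extraction of the factor $[3p(\ln b-\ln a)]^{-1/q}$), all of which checks out.
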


\begin{proof}
By
\begin{equation*}
G(1,3p) = \int_0^1 ta^{3p(1-t)}b^{3pt} \td t
= \frac{b^{3p}-L\bigl(a^{3p},b^{3p}\bigr)}{\ln b^{3p}-\ln a^{3p}},
\end{equation*}
Corollary \ref{cor-3.3-2012-GA-Ji} can be proved easily.
\end{proof}

\begin{thm}\label{thm4-2012-GA-Ji}
Let $f, g:\mathbb{R}_0\to\mathbb{R}_0$ and $fg\in L([a,b])$ for $0<a<b<\infty $. If $f^q(x)$ is $(\alpha_1 ,m_1)$-GA-convex on $\bigl[0,\max\bigl\{a^{1/m_1},b\bigr\}\bigr]$ and $g^q(x)$ is $(\alpha_2,m_2)$-GA-convex on $\bigl[0,\max\bigl\{a^{1/m_2},b\bigr\}\bigr]$ for $q\ge 1$, $(\alpha_1,m_1)$, and $(\alpha_2,m_2)\in (0,1]^2$, then
\begin{multline}
\int_a^bf(x)g(x)\td x \le (\ln b-\ln a)[L(a,b)]^{1-1/q}\bigl\{m_1m_2[L(a,b) -G(\alpha_1,1)-G(\alpha_2,1)\\
+G(\alpha_1+\alpha_2,1)]f^q \bigl(a^{1/m_1}\bigr)g^q \bigl(a^{1/m_2}\bigr)
+m_1[G(\alpha_2,1)-G(\alpha_1+\alpha_2,1)]f^q\bigl(a^{1/m_1}\bigr)g^q(b)\\
+ m_2 [G(\alpha_1,1)-G(\alpha_1+\alpha_2 ,1)]f^q(b)g^q \bigl(a^{1/m_2}\bigr)
+G(\alpha_1+\alpha_2,1)f^q(b)g^q(b)\bigr\}^{1/q},
\end{multline}
where $G$ and $L$ are respectively defined by~\eqref{2012-GA-Ji} and~\eqref{log-mean-dfn-eq}.
\end{thm}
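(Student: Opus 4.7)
The plan is to mimic the pattern of the previous theorems: perform the substitution $x=a^{1-t}b^{t}$ to pass from $\int_a^b fg\,dx$ to an integral over $[0,1]$, then invoke the power-mean form of H\"older's inequality to split off the weight $[L(a,b)]^{1-1/q}$, and finally apply the two GA-convexity hypotheses to estimate the remaining integrand.

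First I would note that $dx=a^{1-t}b^{t}(\ln b-\ln a)\,dt$, so the change of variable gives
\[
\int_a^b f(x)g(x)\td x=(\ln b-\ln a)\int_0^1 a^{1-t}b^{t}\,f\bigl(a^{1-t}b^{t}\bigr)g\bigl(a^{1-t}b^{t}\bigr)\td t.
\]
Next, writing $a^{1-t}b^{t}\cdot fg=\bigl(a^{1-t}b^{t}\bigr)^{1-1/q}\cdot\bigl(a^{1-t}b^{t}\bigr)^{1/q}fg$ and applying H\"older's inequality with exponents $q/(q-1)$ and $q$ (the standard trick that produces the $[\cdots]^{1-1/q}$ factor whenever $q\ge 1$), I obtain
\[
\int_0^1 a^{1-t}b^{t}fg\,\td t\le\biggl(\int_0^1 a^{1-t}b^{t}\td t\biggr)^{1-1/q}\biggl(\int_0^1 a^{1-t}b^{t} f^{q}g^{q}\td t\biggr)^{1/q},
\]
and the first factor equals $[L(a,b)]^{1-1/q}$.

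For the second factor, I would rewrite $a^{1-t}b^{t}$ in the two forms $b^{t}\bigl(a^{1/m_1}\bigr)^{m_1(1-t)}$ and $b^{t}\bigl(a^{1/m_2}\bigr)^{m_2(1-t)}$, so that Definition~\ref{(a,m)-GA-convex-dfn} applied with $\lambda=t$ yields
\[
f^{q}\bigl(a^{1-t}b^{t}\bigr)\le t^{\alpha_1}f^{q}(b)+m_1(1-t^{\alpha_1})f^{q}\bigl(a^{1/m_1}\bigr),
\]
\[
g^{q}\bigl(a^{1-t}b^{t}\bigr)\le t^{\alpha_2}g^{q}(b)+m_2(1-t^{\alpha_2})g^{q}\bigl(a^{1/m_2}\bigr).
\]
Because $f,g\ge0$, I may multiply these two inequalities and expand into four nonnegative summands with coefficients $t^{\alpha_1+\alpha_2}$, $t^{\alpha_1}(1-t^{\alpha_2})$, $t^{\alpha_2}(1-t^{\alpha_1})$, and $(1-t^{\alpha_1})(1-t^{\alpha_2})$.

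The remaining step is bookkeeping: integrating each of these four summands against the weight $a^{1-t}b^{t}$ and identifying the results with $G(\alpha_1+\alpha_2,1)$, $G(\alpha_1,1)-G(\alpha_1+\alpha_2,1)$, $G(\alpha_2,1)-G(\alpha_1+\alpha_2,1)$, and $L(a,b)-G(\alpha_1,1)-G(\alpha_2,1)+G(\alpha_1+\alpha_2,1)$ respectively, using only that $\int_0^1 a^{1-t}b^{t}\td t=L(a,b)$ and the definition~\eqref{2012-GA-Ji}. Collecting the four coefficients with the corresponding products of $f^{q}$ and $g^{q}$ values, raising to the $1/q$ power, and multiplying by the prefactor $(\ln b-\ln a)[L(a,b)]^{1-1/q}$ produces exactly the stated bound. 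The only place where I expect a small obstacle is the combinatorial expansion of the product and the sign-tracking when one writes $1-t^{\alpha_i}$ as $1$ minus a power, but this is purely algebraic and follows the same template as the $G(\alpha,\cdot)$ manipulations already used in Theorems~\ref{thm1-2012-GA-Ji} and~\ref{thm3-2012-GA-Ji}.
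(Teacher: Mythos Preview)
Your proposal is correct and follows essentially the same route as the paper: the substitution $x=a^{1-t}b^{t}$, the H\"older split producing the factor $[L(a,b)]^{1-1/q}$, the two $(\alpha_i,m_i)$-GA-convexity bounds for $f^q$ and $g^q$, their multiplication (justified by $f,g\ge0$), and the identification of the four resulting integrals with $L(a,b)$ and the $G(\cdot,1)$ quantities are precisely the steps in the paper's proof.
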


\begin{proof}
Using the $(\alpha_1 ,m_1)$-GA-convexity of $f^q(x)$ and the $(\alpha_2,m_2)$-GA-convexity of $g^q(x)$, we have
\begin{equation*}
f^q\bigl(a^{1-t}b^t\bigr)\le t^{\alpha_1}f^q(b)+m_1(1-t^{\alpha_1})f^q\bigl(a^{1/m_1}\bigr)
\end{equation*}
and
\begin{equation*}
g^q\bigl(a^{1-t}b^t\bigr)\le t^{\alpha_2}g^q(b)+m_2(1-t^{\alpha_2})g^q\bigl(a^{1/m_2}\bigr)
\end{equation*}
for $0\le t\le 1$. Letting $x=a^{1-t}b^t$ for $0\le t\le 1$ and using H\"older's inequality figure out
\begin{align*}
&\quad\int_a^bf(x)g(x)\td x
=(\ln b-\ln a)\int_0^1a^{1-t}b^tf\bigl(a^{1-t}b^t\bigr)g\bigl(a^{1-t}b^t\bigr)\td t \\
&\le (\ln b-\ln a)\biggl(\int_0^1a^{1-t}b^t\td t\biggr)^{1-1/q}
 \biggl\{\int_0^1a^{1-t}b^t\bigl[f\bigl(a^{1-t}b^t\bigr)g\bigl(a^{1-t}b^t\bigr)\bigr]^q\td t\biggr\}^{1/q}\\
&\le (\ln b-\ln a)\biggl(\int_0^1a^{1-t}b^t\td t\biggr)^{1-1/q}\biggl\{\int_0^1a^{1-t}b^t\bigl[t^{\alpha _1}f^q(b)\\
&\quad+m_1(1-t^{\alpha_1})f^q\bigl(a^{1/m_1}\bigr)\bigr]
\bigl[t^{\alpha_2}g^q(b)+m_2(1-t^{\alpha_2})g^q\bigl(a^{1/m_2}\bigr)\bigr]\td t\biggr\}^{1/q}\\
&=(\ln b-\ln a)[L(a,b)]^{1- 1/q}\biggl\{\int_0^1a^{1-t}b^t\bigl[t^{\alpha_1+\alpha_2}f^q(b)g^q(b)\\
&\quad+m_1 t^{\alpha_2 }(1-t^{\alpha_1 })f^q\bigl(a^{1/m_1}\bigr)g^q(b)
+ m_2t^{\alpha_1} (1-t^{\alpha _2 })f^q(b)g^q\bigl(a^{1/m_2}\bigr)\\
&\quad+ m_1m_2(1-t^{\alpha_1})(1-t^{\alpha _2})f^q\bigl(a^{1/m_1}\bigr)g^q\bigl(a^{1/m_2}\bigr)\bigr]\td t \biggr\}^{1/q} \\
&=(\ln b-\ln a)[L(a,b)]^{1-1/q}\bigl\{m_1m_2[L(a,b)-G(\alpha_1,1) \\
&\quad-G(\alpha_2,1)+G(\alpha_1+\alpha_2,1)]f^q\bigl(a^{1/m_1}\bigr)g^q\bigl(a^{1/m_2}\bigr)\\
&\quad +m_1[G(\alpha_2,1)-G(\alpha_1+\alpha_2,1)]f^q\bigl(a^{1/m_1}\bigr)g^q(b)\\
&\quad+ m_2 [G(\alpha_1,1)-G(\alpha_1+\alpha_2 ,1)]f^q(b)g^q\bigl(a^{1/m_2}\bigr) +G(\alpha_1+\alpha_2,1)f^q(b)g^q(b)\bigr\}^{1/q}.
\end{align*}
The proof of Theorem~\ref{thm4-2012-GA-Ji} is complete.
\end{proof}

\begin{cor}\label{cor-3.4-1-2012-GA-Ji-cor}
Under the conditions of Theorem~\ref{thm4-2012-GA-Ji},
\begin{enumerate}
\item
if $q=1$, then
\begin{multline}
\int_a^bf(x)g(x)\td x \le(\ln b-\ln a)\bigl\{m_1m_2[L(a,b)-G(\alpha_1,1)-G(\alpha_2,1)\\
+G(\alpha_1+\alpha_2,1)]f\bigl(a^{1/m_1}\bigr)g\bigl(a^{1/m_2}\bigr)
+m_1[G(\alpha_2,1)-G(\alpha_1+\alpha_2,1)]f\bigl(a^{1/m_1}\bigr)g(b) \\
+ m_2 [G(\alpha_1,1)-G(\alpha_1+\alpha_2 ,1)]f(b)g\bigl(a^{1/m_2}\bigr)
+G(\alpha_1+\alpha_2,1)f(b)g(b) \bigr\},
\end{multline}
\item
if $q=1$ and $\alpha_1=\alpha_2=m_1=m_2=1$, then
\begin{multline}
\int_a^bf(x)g(x)\td x\le\frac{1}{\ln b-\ln a}\{[2L(a,b)-a(\ln b-\ln a)-2a]f(a)g(a)+[a+b\\
- 2L(a,b)][f(a)g(b)+f(b)g(a)]+[2L(a,b)+b(\ln b-\ln a)-2b]f(b)g(b)\},
\end{multline}
\item
if $\alpha_1=\alpha_2=m_1=m_2=1$, then
\begin{multline}
\int_a^bf(x)g(x)\td x\le \frac{[L(a,b)]^{1-1/q}}{(\ln b-\ln a)^{2/(q-1)}}\bigl\{[2L(a,b)-a(\ln b-\ln a)-2a]f^q(a)g^q(a)\\
+[a+b- 2L(a,b)][f^q(a)g^q(b)+f^q(b)g^q(a)]\\
+[2L(a,b)+b(\ln b-\ln a)-2b]f^q(b)g^q(b)\bigr\}^{1/q}.
\end{multline}
\end{enumerate}
\end{cor}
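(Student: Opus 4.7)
The plan is to deduce all three parts of Corollary~\ref{cor-3.4-1-2012-GA-Ji-cor} as direct specializations of Theorem~\ref{thm4-2012-GA-Ji}; no new estimation is needed, only bookkeeping on the functional $G(\alpha,1)$ and the collection of the resulting coefficients.

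Part~(1) is essentially automatic: setting $q=1$ makes $[L(a,b)]^{1-1/q}=1$ and removes the outer $q$-th root from the braces, so what is left is exactly the displayed inequality. The real work lies in parts~(2) and~(3), both of which require setting $\alpha_1=\alpha_2=m_1=m_2=1$ and then replacing $G(1,1)$ and $G(2,1)$ by closed-form expressions. Writing $s=\ln b-\ln a$ and using $a^{1-t}b^{t}=ae^{st}$, two successive integrations by parts give
\begin{equation*}
G(1,1)=\frac{b-L(a,b)}{s},\qquad G(2,1)=\frac{bs-2b+2L(a,b)}{s^{2}}.
\end{equation*}
From these, the three coefficient combinations that appear in Theorem~\ref{thm4-2012-GA-Ji} when $\alpha_1=\alpha_2=1$ simplify, after a short calculation using the identity $sL(a,b)=b-a$, to
\begin{equation*}
L(a,b)-2G(1,1)+G(2,1)=\frac{2L(a,b)-as-2a}{s^{2}},\qquad G(1,1)-G(2,1)=\frac{a+b-2L(a,b)}{s^{2}},
\end{equation*}
together with $G(2,1)=[2L(a,b)+bs-2b]/s^{2}$.

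To obtain part~(2), I would substitute these three identities (with $m_1=m_2=1$) into the inequality from part~(1). The common factor $1/s^{2}$ inside the braces combines with the outer $s$ to produce the single $1/(\ln b-\ln a)$ displayed in the statement, and the three bracketed coefficients then match those claimed. For part~(3), the same substitutions are made directly in the conclusion of Theorem~\ref{thm4-2012-GA-Ji} (now keeping the $[L(a,b)]^{1-1/q}$ factor and the outer $q$-th root in place); the common $s^{-2}$ can then be pulled out of $\{\cdot\}^{1/q}$ as $s^{-2/q}$, and combining this with the prefactor $(\ln b-\ln a)=s$ yields the prefactor asserted in the statement, while the coefficients inside the braces are the same three expressions as in~(2), now multiplying $f^{q}$ and $g^{q}$ rather than $f$ and $g$.

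The only genuinely computational step is the evaluation of the two integrals defining $G(1,1)$ and $G(2,1)$; everything else is collection of terms. The main point to watch is the algebraic simplification of $L(a,b)-2G(1,1)+G(2,1)$, in which the identity $sL(a,b)=b-a$ must be used to cancel the degree-one terms in $b$ and leave the correct $a$-dependence; tracking the powers of $s$ when the common factor $s^{-2}$ is pulled out of the $q$-th root in part~(3) is the one small source of arithmetic risk.
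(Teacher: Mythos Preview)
Your approach is exactly the paper's: the corollary is stated there without proof, i.e.\ all three parts are meant as direct specializations of Theorem~\ref{thm4-2012-GA-Ji}, and your evaluations of $G(1,1)=[b-L(a,b)]/s$ and $G(2,1)=[bs-2b+2L(a,b)]/s^{2}$ together with the three coefficient identities are correct.

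One point, however, is too optimistic. In part~(3) the step you yourself flag as risky does \emph{not} reproduce the printed prefactor: pulling $s^{-2}$ out of the $q$-th root and combining with the outer factor $s=\ln b-\ln a$ gives
\[
(\ln b-\ln a)\cdot(\ln b-\ln a)^{-2/q}=(\ln b-\ln a)^{(q-2)/q},
\]
not $(\ln b-\ln a)^{-2/(q-1)}$ as the statement claims. Your derivation is right and the paper's exponent is a misprint (indeed, as $q\to1$ the printed exponent $-2/(q-1)$ diverges, whereas $(q-2)/q\to-1$ correctly reproduces the $1/(\ln b-\ln a)$ of part~(2)). So in your write-up do not assert that the computation ``yields the prefactor asserted in the statement''; record the correct exponent $(q-2)/q$ instead.
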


\begin{thm}\label{thm5-2012-GA-Ji}
Let $f, g:\mathbb{R}_0\to\mathbb{R}_0$ and $fg\in L([a,b])$ for $0<a<b<\infty $. If
$f^q(x)$ is $(\alpha_1 ,m_1)$-GA-convex on $\bigl[0,\max\bigl\{a^{1/m_1},b\bigr\}\bigr]$ and $g^{q/(q-1)}(x)$ is $(\alpha_2,m_2)$-GA-convex on $\bigl[0,\max\bigl\{a^{1/m_2},b\bigr\}\bigr]$ for $q>1$, $(\alpha_1,m_1)$, and $(\alpha_2,m_2)\in (0,1]^2$, then
\begin{multline}\label{thm5-2012-GA-Ji-1-eq}
\int_a^bf(x)g(x)\td x \le (\ln b-\ln a)\bigl\{m_1f^q\bigl(a^{1/m_1}\bigr)L(a,b)\\
+ G(\alpha_1,1)\bigl[f^q(b)-m_1f^q\bigl(a^{1/ m_1}\bigr)\bigr]\bigr\}^{1 /q}\bigl\{m_2g^{q/(q-1)}\bigl(a^{1/m_2}\bigr)L(a,b)\\
+ G(\alpha _2 ,1)\bigl[g^{q/(q-1)}(b) -m_2 g^{q / (q-1)}\bigl(a^{1/m_2}\bigr)\bigr]\bigr\}^{1-1/q},
\end{multline}
where $G$ and $L$ are respectively defined by~\eqref{2012-GA-Ji} and~\eqref{log-mean-dfn-eq}.
\end{thm}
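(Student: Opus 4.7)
The plan is to mimic the structure of the proof of Theorem~\ref{thm4-2012-GA-Ji}, but to apply H\"older's inequality with the conjugate exponents $q$ and $q/(q-1)$ (rather than $1$ and $q$), which naturally matches the hypothesis that $f^q$ and $g^{q/(q-1)}$ are the quantities that carry the GA-convexity. First I would perform the substitution $x=a^{1-t}b^t$ for $t\in[0,1]$ in the left-hand side, which gives
\begin{equation*}
\int_a^b f(x)g(x)\td x = (\ln b-\ln a)\int_0^1 a^{1-t}b^t f\bigl(a^{1-t}b^t\bigr)g\bigl(a^{1-t}b^t\bigr)\td t,
\end{equation*}
exactly as in the opening step of Theorem~\ref{thm4-2012-GA-Ji}.

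Next I would split the weight $a^{1-t}b^t=\bigl(a^{1-t}b^t\bigr)^{1/q}\bigl(a^{1-t}b^t\bigr)^{(q-1)/q}$, assign the first factor to $f$ and the second to $g$, and apply H\"older's inequality with exponents $q$ and $q/(q-1)$ to obtain
\begin{equation*}
\int_0^1 a^{1-t}b^t f\bigl(a^{1-t}b^t\bigr) g\bigl(a^{1-t}b^t\bigr)\td t
\le \biggl[\int_0^1 a^{1-t}b^t f^q\bigl(a^{1-t}b^t\bigr)\td t\biggr]^{1/q}
\biggl[\int_0^1 a^{1-t}b^t g^{q/(q-1)}\bigl(a^{1-t}b^t\bigr)\td t\biggr]^{1-1/q}.
\end{equation*}
Then I would invoke the $(\alpha_1,m_1)$-GA-convexity of $f^q$ and the $(\alpha_2,m_2)$-GA-convexity of $g^{q/(q-1)}$ in the form $f^q\bigl(a^{1-t}b^t\bigr)\le t^{\alpha_1}f^q(b)+m_1(1-t^{\alpha_1})f^q\bigl(a^{1/m_1}\bigr)$ and likewise for $g^{q/(q-1)}$, bounding each bracketed integral termwise.

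Finally, using the elementary identities $\int_0^1 a^{1-t}b^t\td t=L(a,b)$ and $\int_0^1 t^\alpha a^{1-t}b^t\td t=G(\alpha,1)$ from \eqref{2012-GA-Ji} and \eqref{log-mean-dfn-eq}, each bracketed integral becomes a linear combination of $L(a,b)$ and $G(\alpha_i,1)$ with coefficients $m_i f^q\bigl(a^{1/m_i}\bigr)$, $f^q(b)-m_1 f^q\bigl(a^{1/m_1}\bigr)$, and so on. Collecting and regrouping yields exactly the right-hand side of \eqref{thm5-2012-GA-Ji-1-eq}. The argument is routine once the correct H\"older decomposition is chosen; the only subtle point is to distribute the weight $a^{1-t}b^t$ between both H\"older factors in proportions matching the exponents $q$ and $q/(q-1)$, since pushing the entire weight onto a single factor would give a strictly weaker bound and would not reproduce the symmetric $L(a,b)$-terms appearing in the statement. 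No step requires an estimate beyond the definition of $(\alpha,m)$-GA-convexity and standard H\"older, so I expect no genuine technical obstacle.
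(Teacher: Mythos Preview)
Your proposal is correct and is essentially the same argument as the paper's: the paper applies H\"older's inequality with exponents $q$ and $q/(q-1)$ directly on $[a,b]$ and then makes the substitution $x=a^{1-t}b^t$ in each factor, whereas you substitute first and then apply the weighted H\"older inequality on $[0,1]$; since $\td x=(\ln b-\ln a)a^{1-t}b^t\td t$, these are the same inequality in different coordinates and lead to identical bounds. The remaining steps---invoking the $(\alpha_i,m_i)$-GA-convexity and evaluating via $L(a,b)$ and $G(\alpha_i,1)$---match the paper exactly.
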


\begin{proof}
By the $(\alpha_1,m_1)$-GA-convexity of $f^q(x)$ and the $(\alpha_2,m_2)$-GA-convexity of $g^{q/(q-1)}(x)$, we have
\begin{equation*}
f^q\bigl(a^{1-t}b^t\bigr)\le t^{\alpha_1}f^q(b)+m_1(1-t^{\alpha_1})f^q\bigl(a^{1/m_1}\bigr)
\end{equation*}
and
\begin{equation*}
g^{q/(q-1)}\bigl(a^{1-t}b^t\bigr)\le t^{\alpha_2}g^{q/(q-1)}(b)+m_2(1-t^{\alpha_2})g^{q/(q-1)}\bigl(a^{1/m_2}\bigr)
\end{equation*}
for $t\in [0,1]$. Letting $x=a^{1-t}b^t$ for $0\le t\le 1$ and employing H\"older's inequality yield
\begin{align*}
&\quad\int_a^bf(x)g(x)\td x\le \biggl[\int_a^bf^q(x)\td x\biggr]^{1/q}
\biggl[\int_a^b g^{q/(q-1)}(x)\td x \biggr]^{1-1/q}\\
 &=(\ln b-\ln a)\biggl[\int_0^1a^{1-t}b^tf^q\bigl(a^{1-t}b^t\bigr)\td t \biggr]^{1 /q}
 \biggl[\int_0^1a^{1-t}b^tg^{q/(q-1)}\bigl(a^{1-t}b^t\bigr)\td t\biggr]^{1-1/q} \\
&\le (\ln b-\ln a)\biggl[\int_0^1 a^{1-t}b^t\bigl[t^{\alpha_1}
f^q(b)+ m_1(1-t^{\alpha_1} )f^q\bigl(a^{1/m_1}\bigr)\bigr]\td t\biggr]^{1/q} \\
&\quad\times \biggl[\int_0^1a^{1-t}b^t\bigl[t^{\alpha_2}g^{q/(q-1)}(b)+ m_2(1-t^{\alpha_2})g^{q/(q-1)}\bigl(a^{1/m_2}\bigr)\bigr]\td t \biggr]^{1-1/q} \\
&=(\ln b-\ln a)\bigl\{m_1f^q\bigl(a^{1/m_1}\bigr)L(a,b)+G(\alpha_1,1)\bigl[f^q(b)- m_1f^q\bigl(a^{1/m_1}\bigr)\bigr]\bigr\}^{1/q}\\
&\quad\times \bigl\{m_2 g^{q/(q-1)}\bigl(a^{1/m_2}\bigr)L(a,b)\\
&\quad+G(\alpha _2 ,1) \bigl[g^{q/(q-1)}(b)-m_2g^{q/(q-1)}\bigl(a^{1/m_2}\bigr)\bigr]\bigr\}^{1- 1/q}.
\end{align*}
The proof of Theorem~\ref{thm5-2012-GA-Ji} is complete.
\end{proof}

\begin{cor}\label{cor-3.5-2012-GA-Ji}
Under the conditions of Theorem~\ref{thm5-2012-GA-Ji}, if $\alpha_1=\alpha_2=m_1=m_2=1$, then
\begin{multline}
\int_a^bf(x)g(x)\td x\le \{f^q(a)[L(a,b)-a]+[b-L(a,b)]f^q(b)\}^{1/q} \\
\times\bigl\{g^{q/(q-1)}(a)[L(a,b)-a]+[b-L(a,b)]g^{q/(q-1)}(b)\bigr\}^{1-1/q}.
\end{multline}
\end{cor}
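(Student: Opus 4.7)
The plan is to specialize Theorem~\ref{thm5-2012-GA-Ji} by setting $\alpha_1=\alpha_2=m_1=m_2=1$ in the inequality~\eqref{thm5-2012-GA-Ji-1-eq} and then simplify each of the two curly braces into the form stated in the corollary. The argument is purely computational; the main care needed is to split the external prefactor $\ln b-\ln a$ correctly between the two factors, using the trivial identity $1=\tfrac1q+(1-\tfrac1q)$.

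First I would compute $G(1,1)$. By integration by parts (or by mimicking the evaluations of $G(1,3)$ and $G(1,3q)$ performed in the proofs of Corollaries~\ref{cor-3.1-2-2012-GA-Ji} and~\ref{cor-3-1.3-2012-GA-Ji}), one gets
\[
G(1,1)=\int_0^1 t\,a^{1-t}b^{t}\td t =\frac{b-L(a,b)}{\ln b-\ln a}.
\]
With $m_1=m_2=1$, the points $a^{1/m_1}$ and $a^{1/m_2}$ collapse to $a$, so~\eqref{thm5-2012-GA-Ji-1-eq} becomes
\[
\int_a^bf(x)g(x)\td x\le(\ln b-\ln a)\bigl\{f^q(a)L(a,b)+G(1,1)[f^q(b)-f^q(a)]\bigr\}^{1/q}\cdot\bigl\{\cdots\bigr\}^{1-1/q},
\]
where the second brace has the same shape with $f^q$ replaced by $g^{q/(q-1)}$.

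Next I would absorb $\ln b-\ln a=(\ln b-\ln a)^{1/q}(\ln b-\ln a)^{1-1/q}$ into the two braces. Using $(\ln b-\ln a)L(a,b)=b-a$ and $(\ln b-\ln a)G(1,1)=b-L(a,b)$, the content of the first brace becomes
\[
(b-a)f^q(a)+[b-L(a,b)][f^q(b)-f^q(a)]=f^q(a)[L(a,b)-a]+[b-L(a,b)]f^q(b),
\]
which is exactly the expression inside the $1/q$-power in the corollary. Performing the identical calculation on the second brace (with $g^{q/(q-1)}$ in place of $f^q$) produces the factor raised to $1-1/q$. Combining the two yields the claimed inequality.

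I do not expect any real obstacle: the proof is a substitution followed by elementary algebra. The one place to be watchful is the symmetric redistribution of the prefactor $\ln b-\ln a$ between the $1/q$-power and the $(1-1/q)$-power, which is what allows the appearance of $L(a,b)-a$ and $b-L(a,b)$ without any residual logarithmic factors in the final inequality.
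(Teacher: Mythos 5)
Your computation is correct and is exactly the (implicit) argument the paper intends: substitute $\alpha_1=\alpha_2=m_1=m_2=1$ into Theorem~\ref{thm5-2012-GA-Ji}, use $G(1,1)=\frac{b-L(a,b)}{\ln b-\ln a}$, and split the prefactor as $(\ln b-\ln a)^{1/q}(\ln b-\ln a)^{1-1/q}$ to absorb it into the two braces via $(\ln b-\ln a)L(a,b)=b-a$. The paper states this corollary without proof, and your derivation fills in precisely the routine simplification it relies on.
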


\begin{thm}\label{thm6-2012-GA-Ji}
Let $f, g:\mathbb{R}_0\to\mathbb{R}_0$ and $fg\in L([a,b])$ for $0<a<b<\infty $. If
$f(x)$ is $(\alpha_1 ,m_1)$-GA-concave on $\bigl[0,\max\bigl\{a^{1/m_1},b\bigr\}\bigr]$ and $g(x)$ is $(\alpha_2,m_2)$-GA-concave on $\bigl[0,\max\bigl\{a^{1/m_2},b\bigr\}\bigr]$ for $(\alpha_1,m_1)\in (0,1]^2$ and $(\alpha_2,m_2)\in (0,1]^2$, then
\begin{multline}\label{thm5-2012-GA-Ji-2}
\int_a^bf(x)g(x)\td x
\ge(\ln b-\ln a)\bigl\{m_1m_2[L(a,b)-G(\alpha_1 ,1)-G(\alpha_2,1) \\
+G(\alpha_1+\alpha_2 ,1)]f\bigl(a^{1/m_1}\bigr)g\bigl(a^{1/m_2}\bigr)
 +m_1[G(\alpha_2,1)-G(\alpha_1+\alpha_2,1)]f\bigl(a^{1/m_1}\bigr)g(b) \\
+m_2[G(\alpha_1,1)-G(\alpha_1+\alpha_2,1)]f(b)g\bigl(a^{1/m_2}\bigr)+G(\alpha_1+\alpha_2,1)f(b)g(b)\bigr\},
\end{multline}
where $G$ and $L$ are respectively defined by~\eqref{2012-GA-Ji} and~\eqref{log-mean-dfn-eq}.
\end{thm}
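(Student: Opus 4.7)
The plan is to mirror the proof of Theorem~\ref{thm4-2012-GA-Ji} but with all inequalities reversed, exploiting the fact that Hölder's inequality is not needed here (there is no $q$-th power in the statement) and that the hypothesis $f,g:\mathbb{R}_0\to\mathbb{R}_0$ makes all the bounds non-negative, so pointwise lower bounds multiply correctly.

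First I would perform the geometric substitution $x=a^{1-t}b^t$, $t\in[0,1]$, for which $\td x=x(\ln b-\ln a)\td t$, to rewrite
\begin{equation*}
\int_a^b f(x)g(x)\td x=(\ln b-\ln a)\int_0^1 a^{1-t}b^t\, f\bigl(a^{1-t}b^t\bigr)g\bigl(a^{1-t}b^t\bigr)\td t.
\end{equation*}
Next, invoking the $(\alpha_1,m_1)$-GA-concavity of $f$ and the $(\alpha_2,m_2)$-GA-concavity of $g$ on the appropriate intervals (with $a^{1-t}b^t=(a^{1/m_i})^{m_i(1-t)}b^t$), I obtain the reverse-direction pointwise estimates
\begin{equation*}
f\bigl(a^{1-t}b^t\bigr)\ge t^{\alpha_1}f(b)+m_1(1-t^{\alpha_1})f\bigl(a^{1/m_1}\bigr)\ge 0
\end{equation*}
and the analogous one for $g$. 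Because both right-hand sides are non-negative, they may be multiplied term by term without spoiling the inequality.

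Then I would multiply the two lower bounds, multiply by the non-negative factor $a^{1-t}b^t$, and integrate over $[0,1]$. Expanding the product yields four integrands of the form $a^{1-t}b^t\,t^{\alpha_i+\alpha_j}$, $a^{1-t}b^t\,t^{\alpha_i}(1-t^{\alpha_j})$, and $a^{1-t}b^t(1-t^{\alpha_1})(1-t^{\alpha_2})$. Each of these integrals is, by definition~\eqref{2012-GA-Ji} with $\ell=1$ and by definition~\eqref{log-mean-dfn-eq}, a linear combination of $L(a,b)$, $G(\alpha_1,1)$, $G(\alpha_2,1)$, and $G(\alpha_1+\alpha_2,1)$. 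Collecting coefficients of the four products $f(a^{1/m_1})g(a^{1/m_2})$, $f(a^{1/m_1})g(b)$, $f(b)g(a^{1/m_2})$, and $f(b)g(b)$ reproduces exactly the right-hand side of~\eqref{thm5-2012-GA-Ji-2}.

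The argument is essentially routine bookkeeping once the two key observations are made: the reversal of direction preserved under multiplication of non-negative bounds (no Hölder step is required, which is why the awkward $1/q$ exponents of Theorem~\ref{thm4-2012-GA-Ji} do not appear), and the identification of the four resulting $t$-integrals with $L(a,b)$ and values of $G(\cdot,1)$. The main (and only) potential pitfall is the sign: multiplying two reverse inequalities is legitimate only because $f\ge 0$ and $g\ge 0$ on $\mathbb{R}_0$; I would state this explicitly before combining them.
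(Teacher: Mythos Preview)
Your proposal is correct and follows essentially the same route as the paper: the geometric substitution $x=a^{1-t}b^t$, the two reversed pointwise estimates from $(\alpha_i,m_i)$-GA-concavity, their termwise product, and identification of the resulting integrals with $L(a,b)$ and $G(\cdot,1)$. You are in fact slightly more careful than the paper, which mentions H\"older's inequality in this proof even though no H\"older step is actually performed, and which does not explicitly justify why the two reverse inequalities may be multiplied; your observation that both lower bounds are non-negative (since $f,g\ge0$, $t^{\alpha_i}\ge0$, and $1-t^{\alpha_i}\ge0$ on $[0,1]$) is exactly the point that makes this legitimate.
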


\begin{proof}
Since $f(x)$ is $(\alpha_1 ,m_1)$-GA-concave
on $\bigl[0,\max\bigl\{a^{1/m_1},b\bigr\}\bigr]$ and $g(x)$ is $(\alpha_2,m_2)$-GA-concave
on $\bigl[0,\max\bigl\{a^{1/m_2},b\bigr\}\bigr]$, we have
\begin{align*}
&f\bigl(a^{1-t}b^t\bigr)\ge t^{\alpha_1}f(b)+m_1(1-t^{\alpha_1})f\bigl(a^{1/m_1}\bigr)
\end{align*}
and
\begin{align*}
&g\bigl(a^{1-t}b^t\bigr)\ge t^{\alpha_2}g(b)+m_2(1-t^{\alpha_2})g\bigl(a^{1/m_2}\bigr)
\end{align*}
for $t\in [0,1]$. Further letting $x=a^{1-t}b^t$ for $0\le t\le 1$ and utilizing H\"older's inequality reveal
\begin{align*}
&\quad\int_a^bf(x)g(x)\td x=(\ln b-\ln a) \int_0^1a^{1-t}b^tf\bigl(a^{1-t}b^t\bigr)g\bigl(a^{1-t}b^t\bigr)\td t \\
& \ge (\ln b-\ln a)\biggl\{\int_0^1a^{1-t}b^t\bigl[t^{\alpha _1}f(b) + m_1(1-t^{\alpha_1} )
f\bigl(a^{1/m_1}\bigr)\bigr]\\
&\quad\times\bigl[t^{\alpha_2 }g(b) + m_2(1-t^{\alpha_2 })g\bigl(a^{1/m_2}\bigr)\bigr]\td t\biggr\}\\
&=(\ln b-\ln a)\int_0^1a^{1-t}b^t\bigl[t^{\alpha_1+\alpha _2}
f(b)g(b)+m_1(1-t^{\alpha_1})t^{\alpha_2}f\bigl(a^{1/m_1}\bigr)g(b) \\
&\quad+ m_2 t^{\alpha_1}(1-t^{\alpha _2})f(b)g\bigl(a^{1/m_2}\bigr)
+ m_1m_2(1-t^{\alpha_1})(1-t^{\alpha_2})g\bigl(a^{1/m_2}\bigr)f\bigl(a^{1/m_1}\bigr)\bigl]\td t \\
& =(\ln b-\ln a)\bigl\{m_1m_2[L(a,b)-G(\alpha_1 ,1)-G(\alpha_2,1)\\
&\quad+G(\alpha_1+\alpha_2 ,1)]f\bigl(a^{1/m_1}\bigr)g\bigl(a^{1/m_2}\bigr)
 +m_1[G(\alpha_2,1)-G(\alpha_1+\alpha_2,1)]f\bigl(a^{1/m_1}\bigr)g(b)\\
&\quad+m_2[G(\alpha_1,1)-G(\alpha_1+\alpha_2,1)]f(b)g\bigl(a^{1/m_2}\bigr)+G(\alpha_1+\alpha_2,1)f(b)g(b)\bigr\}.
\end{align*}
The proof of Theorem~\ref{thm6-2012-GA-Ji} is complete.
\end{proof}

\begin{cor}\label{cor-3.6-2012-GA-Ji}
Under the conditions of Theorem~\ref{thm6-2012-GA-Ji}, if $\alpha_1=\alpha_2=m_1=m_2=1$, we have
\begin{multline}
\int_a^bf(x)g(x)\td x\ge(\ln b-\ln a)\{[2L(a,b)-a(\ln b -\ln a)-2a]f(a)g(a)+[a+b\\
-2L(a,b)][f(a)g(b)+f(b)g(a)]+[2L(a,b)+b(\ln b-\ln a)-2b]f(b)g(b)\}.
\end{multline}
\end{cor}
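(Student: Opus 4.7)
The plan is to specialize Theorem~\ref{thm6-2012-GA-Ji} by setting $\alpha_1=\alpha_2=m_1=m_2=1$. Under these substitutions the exponents $1/m_1$ and $1/m_2$ both collapse to $1$, so $f\bigl(a^{1/m_1}\bigr)$ and $g\bigl(a^{1/m_2}\bigr)$ become $f(a)$ and $g(a)$, and the four coefficients inside the braces of~\eqref{thm5-2012-GA-Ji-2} reduce to linear combinations of $L(a,b)$, $G(1,1)$, and $G(1+1,1)=G(2,1)$. Hence the whole task boils down to computing these two integrals in closed form and then performing a bookkeeping simplification.

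To evaluate $G(\alpha,1)=\int_0^1 t^\alpha a^{1-t}b^t\td t$ for $\alpha\in\{1,2\}$, I would apply the change of variables $x=a^{1-t}b^t$, equivalently $t=(\ln x-\ln a)/c$ with $c=\ln b-\ln a$ and $\td x = xc\,\td t$, which converts the integral to
\begin{equation*}
G(\alpha,1)=\frac{1}{c^{\alpha+1}}\int_a^b(\ln x-\ln a)^\alpha\td x.
\end{equation*}
For $\alpha=1$ the right-hand side is elementary (one integration by parts), and for $\alpha=2$ it requires integration by parts twice; both answers can then be rewritten in terms of $L(a,b)=(b-a)/c$ and of $a$, $b$, $c$.

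With these explicit values in hand, the proof is finished by substituting them into Theorem~\ref{thm6-2012-GA-Ji} and collecting terms in $f(a)g(a)$, $f(a)g(b)+f(b)g(a)$, and $f(b)g(b)$. Concretely, the coefficients to simplify are
\begin{equation*}
L(a,b)-2G(1,1)+G(2,1),\qquad G(1,1)-G(2,1),\qquad G(2,1).
\end{equation*}
The non-trivial part of the work is this final algebra: repeatedly using the identity $b-a=c\,L(a,b)$ to trade factors of $b-a$ for $L(a,b)$ (and vice versa) is the device that converts the ``$G$-language'' of Theorem~\ref{thm6-2012-GA-Ji} into the ``$L,a,b,c$-language'' displayed in the statement of Corollary~\ref{cor-3.6-2012-GA-Ji}. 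No new inequality is invoked; the passage from the theorem to its corollary is entirely a matter of evaluating two elementary integrals and tidying the result, with the main obstacle being vigilance against sign and $c$-power errors during the collection of terms.
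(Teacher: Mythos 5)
Your proposal is correct and is essentially the paper's own (unstated) route: the neighbouring corollaries are proved by exactly this device of evaluating $G(1,\ell)$ in closed form, and Corollary~\ref{cor-3.6-2012-GA-Ji} is the same computation with $G(1,1)=\frac{b-L(a,b)}{\ln b-\ln a}$ and $G(2,1)=\frac{2L(a,b)+b(\ln b-\ln a)-2b}{(\ln b-\ln a)^2}$ substituted into Theorem~\ref{thm6-2012-GA-Ji}. One remark worth recording: carrying your calculation through yields the overall prefactor $\frac{1}{\ln b-\ln a}$ rather than $(\ln b-\ln a)$, consistent with item (2) of Corollary~\ref{cor-3.4-1-2012-GA-Ji-cor}, so the displayed statement contains precisely the kind of $c$-power slip you caution against, and your method detects and corrects it.
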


\end{document}